\numberwithin{equation}{section}
\newcommand{\beg}{\begin{equation}}
\newcommand{\eeg}{\end{equation}}
\newcommand{\ben}{\begin{eqnarray*}}
	\newcommand{\een}{\end{eqnarray*}}
\newtheorem{thm}{Theorem}[section]
\newtheorem{cor}[thm]{Corollary}
\newtheorem{lem}[thm]{Lemma}
\newtheorem{prop}[thm]{Proposition}
\numberwithin{equation}{section} 
\theoremstyle{definition}
\newtheorem{defn}[thm]{Definition}
\newtheorem{rem}[thm]{Remark}
\newtheorem{note}[thm]{Note}
\newcommand{\HS}{\mathcal H}
\newcommand{\C}{\mathbb{C}}
\newcommand{\D}{\mathbb{D}}
\newcommand{\T}{\mathbb{T}}
\newcommand{\ft}{\mathcal F_O}
\newcommand{\gn}{\mathbb{G}_n}
\newcommand{\ov}{\overline}
\begin{document}

\title[$C._0 \,\Gamma_n$-contractions]
{Dilation, functional model and a complete unitary invariant for $C._{0}\,\; \Gamma_n$-contractions}

\author[Sourav Pal]{Sourav Pal}
\address[Sourav Pal]{Mathematics Department, Indian Institute of Technology Bombay,
Powai, Mumbai - 400076, India.} \email{souravmaths@gmail.com , sourav@math.iitb.ac.in}

\keywords{$\Gamma_n$-contraction, Fundamental operator tuple,
Operator model, Complete unitary invariant}

\subjclass[2010]{47A13, 47A20, 47A25, 47A45}

\thanks{The author was supported by the Seed Grant of IIT Bombay, the CPDA of Govt. of India, the
INSPIRE Faculty Award (Award No. DST/INSPIRE/04/2014/001462) of DST, India and the MATRICS
Award of SERB, (Award No. MTR/2019/001010) of DST, India.}

\begin{abstract}

 A commuting tuple of operators $(S_1,\dots, S_{n-1},P)$, defined on a Hilbert space $\mathcal H$, for which the closed symmetrized polydisc
\[
\Gamma_n =\left\{ \left(\sum_{1\leq i\leq n} z_i,\sum_{1\leq
i<j\leq n}z_iz_j,\dots, \prod_{i=1}^n z_i \right): \,|z_i|\leq 1,
i=1,\dots,n \right \}
\]
is a spectral set, is called a $\Gamma_n$-\textit{contraction}. A $\Gamma_n$-contraction is said to be
 \textit{pure} or $C._0$ if $P$ is $C._0$, that is, if ${P^*}^n \rightarrow 0$ strongly as $n \rightarrow \infty$. We show that for any $\Gamma_n$-contraction $(S_1,\dots, S_{n-1},P)$, there is a unique operator tuple $(A_1,\dots , A_{n-1})$ that satisfies the  operator identities
 \[
 S_i-S_{n-i}^*P=D_PA_iD_P\,, \quad \quad i=1,\dots, n-1.
 \]
This unique tuple is called the \textit{fundamental operator tuple} or $\ft$-tuple of $(S_1,\dots, S_{n-1},P)$. With the help of the $\ft$-tuple, we construct an operator model for a $C._0$ $\Gamma_n$-contraction and show that there exist $n-1$ operators $C_1,\dots, C_{n-1}$ such that each $S_i$ can be represented as $S_i=C_i+PC_{n-i}^*$. We find an explicit minimal dilation for a class of $C._0$ $\Gamma_n$-contractions whose $\ft$-tuples satisfy a certain condition. Also we establish that the $\ft$-tuple of $(S_1^*,\dots, S_{n-1}^*,P^*)$ together with the characteristic function of $P$ constitute a complete unitary invariant for the $C._0$ $\Gamma_n$-contractions. The entire program is an analogue of the Nagy-Foias theory for $C._0$ contractions.

\end{abstract}

\maketitle

%\tableofcontents

\section{Introduction}

\noindent Throughout the paper all operators are bounded linear operators defined on complex Hilbert spaces. A contraction is an operator whose norm is not greater than $1$. We denote by $\C , \mathbb N$ the set of complex numbers and positive integers respectively. The open unit disc and the unit circle with center at the origin in $\C$ are denoted by $\D$ and $\T$ respectively.\\ 

For studying an operator, it suffices to consider only contractions because an operator is just a scalar multiple of a contraction. A few decades ago Sz.-Nagy and Foias initiated a program of determining the structure of a contraction and modeling it as a compression of a unitary operator. As a consequence numerous novel results were achieved and their beautiful constructive proofs were witnessed. A keen reader is referred to the classic \cite{nagy} and references there in. In 1951, von Neumann, \cite{von-Neumann}, introduced the notion of spectral set for an operator and described the contractions as operators having $\overline{\mathbb D}$ as a spectral set.
\begin{thm}[von Neumann, 1951]
An operator $T$ is a contraction if and only if $\overline{\mathbb
D}$ is a spectral set for $T$.
\end{thm}
The notion of spectral set, which was later defined by Arveson (e.g., \cite{arveson2}) for any finite tuple of commuting operators, became more popular and effective in deciphering the interplay between the intrinsic properties of an operator tuple and the complex geometry of an underlying compact subset of $\mathbb C^n$ associated with the tuple (e.g., see \cite{ AgMcC, paulsen}). In 1953, Sz.-Nagy published a very influential article, \cite{nagy1}, where he established the following dilation theorem whose impact is extraordinary till date.
\begin{thm}[Sz.-Nagy, 1953]
If $T$ is a contraction acting on a Hilbert space $\mathcal H$,
then there exists a Hilbert space $\mathcal K \supseteq \mathcal
H$ and a unitary $U$ on $\mathcal K$ such that
\[
p(T)=P_{\mathcal H}p(U)|_{\mathcal H} \quad \text{ for every polynomial } p\in \C[z].
\]
\end{thm}
A major and important class of operators that has been extensively studied by Sz.-Nagy, Foias and many other mathematicians (e.g., \cite{nagy}), is the $C._0$ class of contractions. A contraction $T$, defined on a Hilbert space $\mathcal H$, is said to be $ C._0$ or \textit{pure}, if ${T^*}^nh \rightarrow 0$ as $n \rightarrow \infty$ for all $h\in\mathcal H$. The aim of this article is to study the $C._0$ operator tuples associated with the symmetrized polydisc $\gn$, where
\[
\gn= \left\{ \left(\sum_{1\leq i\leq n} z_i,\sum_{1\leq
i<j\leq n}z_iz_j,\dots, \prod_{i=1}^n z_i \right): \,|z_i| < 1,\;
i=1,\dots,n \right \}.
\]
The symmetrized polydisc $\gn$ and its closure $\Gamma_n$, given by
\[
\Gamma_n= \left\{ \left(\sum_{1\leq i\leq n} z_i,\sum_{1\leq
i<j\leq n}z_iz_j,\dots, \prod_{i=1}^n z_i \right): \,|z_i|\leq 1, \;
i=1,\dots,n \right \},
\]
are the images of polydisc $\D^n$ and its closure $\ov{\D^n}$ respectively under the symmetrization map $\pi_n: \C^n \rightarrow \C^n$ defined by
\[
\pi_n(z_1,\dots, z_n) = \left(\sum_{1\leq i\leq n} z_i,\sum_{1\leq
i<j\leq n}z_iz_j,\dots, \prod_{i=1}^n z_i \right).
\]
The family of domains $\{ \gn :n\in \mathbb N \}$ was introduced in \cite{costara1} to study the spectral Nevanlinna-Pick problem. Indeed, the family of domains $\{ \mathbb G_n: n\in \mathbb N\}$ is naturally associated with spectral interpolation in the following way: if $\mathcal M_n(\mathbb C)$ is the space of all $n\times n$ complex matrices and if $\mathcal B_1$ is its spectral unit ball, then $A\in \mathcal B_1$ (that is, the spectral radius $r(A)<1$) if and only if $\Pi_n(A) \in \mathbb G_n$, where $\Pi_n(A)=\pi_n(\sigma(A))$, $\sigma(A)$ being the spectrum of $A$. The domain $\mathbb G_n$ is of importance because, apart from the derogatory matrices, the $n\times n$ spectral Nevanlinna-Pick problem is equivalent to a similar interpolation problem of $\mathbb G_n$ (see \cite{ay-ieot}, Theorem 2.1). Note that a bounded domain like $\mathbb G_n$, which has complex-dimension $n$, is much easier to deal with than an unbounded $n^2$-dimensional object like $\mathcal B_1$. The symmetrized polydisc has attracted considerable attention in past two decades because of its rich function theory, beautiful complex geometry and appealing operator theory (see \cite{costara1, edi-zwo, ay-jfa, ay-jot, tirtha-sourav, tirtha-sourav1, BSR} and references there in).\\

In this paper, we analyze and develop a Nagy-Foias type operator theory for the commuting operator tuples having $\Gamma_n$ as a spectral set.
\begin{defn}
A commuting tuple of operators $(S_1,\dots, S_{n-1},P)$
that has $\Gamma_n$ as a spectral set is called a $\Gamma_n$-\textit{contraction}, that is, $(S_1,\dots, S_{n-1},P)$ is a $\Gamma_n$-contraction if the Taylor joint spectrum $\sigma_T(S_1,\dots, S_{n-1},P) \subseteq \Gamma_n$ and von Neumann's inequality
\[
\|f(S_1,\dots, S_{n-1},P)\|\leq \sup_{\bold z \in \Gamma_n}|f(\bold z)|= \|f\|_{\infty, \,\Gamma_n} \qquad [\bold z=(s_1,\dots, s_{n-1},p)]\,,
\]
holds for all rational functions $f=p/q$, $(p, q \in \C[z_1,\dots ,z_n])$ such that $f$ does not have any pole in $\Gamma_n$. Also, a $\Gamma_n$-contraction $(S_1,\dots ,S_{n-1},P)$ is called \textit{pure} or $C._0$ if $P$ is a $C._0$ contraction.
\end{defn}
Here
$
f(S_1,\dots, S_{n-1},P)=p(S_1,\dots, S_{n-1},P)\,q(S_1,\dots, S_{n-1},P)^{-1}
$ 
in order to maintain the standard convention. In Section 2, we shall explain the motivation behind defining $C._0 \; \Gamma_n$-contraction in terms of the last component $P$. A Nagy-Foias type operator theoretic program for the pure $\Gamma_2$-contractions was initiated in \cite{tirtha-sourav} and was further carried out in \cite{tirtha-sourav1}. In this paper, we generalize those results for any arbitrary $n\geq 2$ and our methods are also a generalization of the techniques that were used in \cite{tirtha-sourav, tirtha-sourav1}. Operator theory on $\Gamma_2$ was simpler because rational dilation succeeded on $\Gamma_2$ (see \cite{tirtha-sourav}) and a concrete operator model was obtained as a consequence of dilation, \cite{tirtha-sourav1}. Since rational dilation fails on $\gn$ for $n\geq 3$ (see \cite{sourav1}), only conditional dilation and functional model can be achieved when $n\geq 3$.\\

In \cite{nagy}, Nagy and Foias showed that a $C._0$ contraction $T$, defined on a Hilbert space $\mathcal H$, can be realized as the compression of the shift operator $M_z$ defined on the vectorial Hardy space $H^2(\mathcal D_{T^*})$, where $\mathcal D_{T^*}= \overline{\text{Ran}}\, D_P= \overline{\text{Ran}}\,(I-TT^*)^{\frac{1}{2}}$. To obtain this representation, they first showed that $M_z$ on $H^2(\mathcal D_{P^*})$ is the minimal isometric dilation of $T$. In Theorem \ref{dilation-theorem}, we construct an analogous minimal $\Gamma_n$-isometric dilation of a $C._0 \;\Gamma_n$-contraction $(S_1,\dots ,S_{n-1},P)$ under certain conditions. The minimal dilation space for $(S_1,\dots ,S_{n-1},P)$ is no bigger than the Nagy-Foias minimal dilation space for $P$, which is $H^2(\mathcal D_{P^*})$. As a consequence of this $\Gamma_n$-isometric dilation, we achieve in Theorem \ref{modelthm} a concrete functional model for $(S_1,\dots ,S_{n-1},P)$. Also in Theorem \ref{fm}, we independently produce an operator model for a $C._0$ $\Gamma_n$-contraction without assuming any condition on $(S_1,\dots ,S_{n-1},P)$. This model may or may not be a commutative one.  A unique operator tuple $(B_1,\dots, B_{n-1})$ associated with $(S_1^*,\dots,S_{n-1}^*,P^*)$, which satisfies
\[
S_i^*-S_{n-i}P^*=D_{P^*}B_iD_{P^*} \qquad \; i=1, \dots ,n-1 ,
\]
plays the central role in the constructions of the dilation and the models. The existence and uniqueness of such an $(n-1)$-tuple are proved in Theorem \ref{existence-uniqueness}. Indeed, in Theorem \ref{existence-uniqueness} we consider the general case, that is, we prove that for every $\Gamma_n$-contraction $(S_1,\dots,S_{n-1},P)$ there exists a unique tuple $(A_1,\dots,A_{n-1})$ such that
\[
S_i-S_{n-i}^*P=D_PA_iD_P \qquad \qquad i=1,\dots ,n-1.
\]
For its pivotal role in operator theory on $\Gamma_n$, $(A_1,\dots, A_{n-1})$ is called the \textit{fundamental operator tuple} or shortly the $\ft$-\textit{tuple} of $(S_1,\dots,S_{n-1},P)$. In \cite{costara1}, Costara showed that for any $(s_1,\dots,s_{n-1},p)\in\Gamma_n$ there is a unique $(\beta_1,\dots,\beta_{n-1})\in \Gamma_{n-1}$ such that $ s_i=\beta_i+p\overline{\beta}_{n-i}$ for $i=1,\dots, n-1$.
With the help of the $\ft$-tuple, we find an operator theoretic analogue of this result for a $C._0$ $\Gamma_n$-contraction in Corollaries \ref{representation}, \ref{representation1}. In view of the existence and uniqueness of the $\ft$-tuple of a $\Gamma_n$-contraction (as in Theorem \ref{existence-uniqueness}), a natural question arises: given $n-1$ operators $A_1,\dots, A_{n-1}$, can we find a $\Gamma_n$-contraction for which $(A_1,\dots, A_{n-1})$ is the $\ft$ -tuple ? We provide a partial answer to this question in Theorem \ref{converse}.

One of the most wonderful discoveries in operator theory is the characteristic function of a contraction due to Nagy and Foias (defined in Subsection \ref{op-model}) which is a complete unitary invariant for the completely non-unitary (c.n.u) contractions in the sense that two c.n.u contractions $T_1,T_2$ are unitarily equivalent if and only if their characteristic functions coincide. In Theorem \ref{unitary inv}, we find a complete unitary invariant
for the $C._0$ $\Gamma_n$-contractions. We show that for a $C._0$ $\Gamma_n$-contraction $(S_1,\dots,S_{n-1},P)$,
the $\ft$-tuple $(B_1,\dots,B_{n-1})$ of $(S_1^*,\dots, S_{n-1}^*,P^*)$ and the characteristic function $\Theta_P$ of $P$ constitute a complete unitary invariant.\\

In Section 2, we accumulate a few definitions and results from the literature which will be used in sequel.\\

\noindent \textbf{Note.} The present article is an updated version of a part of the author's unpublished paper \cite{sourav12}. We learned that Theorem \ref{existence-uniqueness} and the preparatory result Proposition \ref{dilation-extension} of this article were independently proved by A. Pal in \cite{A:P}.\\

\noindent \textbf{\textit{Acknowledgement.}} The author is thankful to Orr Shalit for several fruitful comments on this article.

\section{A brief literature and preliminaries}

\vspace{0.4cm}

\noindent Unitary, isometry and co-isometry are special classes of contractions. There are natural analogues of these classes for $\Gamma_n$-contractions in the literature (see \cite{ay-jot, BSR, sourav1}). It was established in \cite{edi-zwo} that the \textit{distinguished boundary} of $\Gamma_{n}$, denoted by $b\Gamma_{n}$, is the symmetrization of the distinguished boundary of the polydisc, which is the $n$-torus $\mathbb T^n$, and thus $b\Gamma_n$ is the set
\[
b\Gamma_{n}=\left\{\left(\sum\limits_{i=1}^{n}z_i, \sum\limits_{1\leq i<j\leq n}z_iz_j, \dots,\prod_{i=1}^nz_i\right): |z_i|=1,\, i=1, \dots, n\right\}.
\]

\begin{defn}
Let $S_1,\dots, S_{n-1},P$ be commuting operators on $\mathcal H$.
Then $(S_1,\dots, S_{n-1},P)$ is called
\begin{itemize}
\item[(i)] a $\Gamma_n$-\textit{unitary} if $S_1,\dots, S_{n-1},P$
are normal operators and $\sigma_T
(S_1,\dots, S_{n-1},P) \subseteq b\Gamma_n$ ;

\item[(ii)] a $\Gamma_n$-\textit{isometry} if there exist a Hilbert space
$\mathcal K \supseteq \mathcal H$ and a $\Gamma_n$-unitary
$(T_1,\dots,T_{n-1},U)$ on $\mathcal K$ such that $\mathcal H$ is
a joint invariant subspace of $T_1,\dots, T_{n-1},U$ and
\[
(T_1|_{\mathcal H},\dots, T_{n-1}|_{\mathcal H},U|_{\mathcal
H})=(S_1,\dots, S_{n-1},P) ;
\]

\item[(iii)] a $\Gamma_n$-\textit{co-isometry} if the adjoint
$(S_1^*,\dots, S_{n-1}^*,P^*)$ is a $\Gamma_n$-isometry.
\end{itemize}
\end{defn}

The following theorems from \cite{sourav1} provide clear
descriptions of a $\Gamma_n$-unitary and a $\Gamma_n$-isometry.

\begin{thm}[\cite{sourav1}, Theorems 4.2 $\&$
4.4]\label{thm:gamma-ui} A commuting tuple of operators
$(S_1,\dots,S_{n-1},P)$ is a $\Gamma_n$-unitary $($or, a
$\Gamma_n$-isometry$)$ if and only if $(S_1,\dots,S_{n-1},P)$ is a
$\Gamma_n$-contraction and $P$ is a unitary $($isometry$)$.
\end{thm}
Needless to mention that $(S_1,\dots,S_{n-1},P)$ is a
$\Gamma_n$-co-isometry if and only if $(S_1,\dots,S_{n-1},P)$ is a
$\Gamma_n$-contraction and $P$ is a co-isometry. So, it is evident
that the nature of a $\Gamma_n$-contraction
$(S_1,\dots,S_{n-1},P)$ is highly influenced by the nature of its
last component $P$. In \cite{sourav3}, the author of
this paper showed that for a given $\Gamma_n$-contraction
$(S_1,\dots,S_{n-1},P)$ on $\mathcal H$, if $P=P|_{\mathcal
H_1}\oplus P|_{\mathcal H_2}$ is the canonical decomposition of
the contraction $P$ with respect to $\mathcal H= \mathcal H_1 \oplus \mathcal H_2$ such that $P|_{\HS_1}$ is a unitary and $P|_{\HS_2}$ is a c.n.u contraction, then both
$\mathcal H_1, \mathcal H_2$ reduce $S_1,\dots,S_{n-1}$ and
$(S_1|_{\mathcal H_1},\dots,S_{n-1}|_{\mathcal H_1},P|_{\mathcal
H_1})$ is a $\Gamma_n$-unitary whereas $(S_1|_{\mathcal
H_2},\dots,S_{n-1}|_{\mathcal H_2},P|_{\mathcal H_2})$ is a
$\Gamma_n$-contraction for which $P|_{\mathcal H_2}$ is a c.n.u
contraction. This unique decomposition was named the
``\textit{canonical decomposition}" of a $\Gamma_n$-contraction. This naturally motivated the author to define a c.n.u $\Gamma_n$-contraction to
be a $\Gamma_n$-contraction $(S_1,\dots, S_{n-1},P)$ for which $P$
is a c.n.u contraction and indeed such a definition is justified.
Taking cue from such dominant roles of $P$ in determining the
special classes of a $\Gamma_n$-contraction
$(S_1,\dots,S_{n-1},P)$ we are led to the following definition.
\begin{defn}\label{def2.5}
A $\Gamma_n$-contraction $(S_1, \dots, S_{n-1}, P)$ acting on $\HS$ is said to be $C._0$ or \textit{pure} if $P$ is a $C._0$ contraction, that is, if ${P^*}^n h \rightarrow 0 $ as $n \rightarrow \infty $ for all $h\in \HS$.
\end{defn}
The following theorem, which provides a characterization for the $\Gamma_n$-unitaries, will be used in sequel.

\begin{thm}[\cite{BSR}, Theorem 4.2]\label{thm:tu}
Let $(S_1,\dots, S_{n-1}, P)$ be a commuting tuple of bounded
operators. Then the following are equivalent.

\begin{enumerate}

\item $(S_1,\dots,S_{n-1},P)$ is a $\Gamma_n$-unitary,

\item $P$ is a unitary,
$(\frac{n-1}{n}S_1,\frac{n-2}{n}S_2,\dots,\frac{1}{n}S_{n-1})$ is
a $\Gamma_{n-1}$-contraction and $S_i = S_{n-i}^* P$ for
$i=1,\dots,n-1$.
\end{enumerate}
\end{thm}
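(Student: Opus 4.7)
The plan is to establish the cyclic chain $(1)\Rightarrow(2)\Rightarrow(3)\Rightarrow(1)$.

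\smallskip

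\noindent\textbf{$(1)\Rightarrow(2)$.} Since $S_1,\dots,S_{n-1},P$ are commuting normals with joint spectrum $K=\sigma_T(S_1,\dots,S_{n-1},P)\subseteq b\Gamma_n\subseteq \Gamma_n$, the continuous joint functional calculus for normal tuples gives $\|f(S_1,\dots,S_{n-1},P)\|=\sup_K|f|\leq \sup_{\Gamma_n}|f|$ for every rational function $f$ continuous on $\Gamma_n$, so $(S_1,\dots,S_{n-1},P)$ is a $\Gamma_n$-contraction. The projection of $K$ onto the last coordinate lies in $\mathbb{T}$ by Theorem \ref{thm:DB}(2), hence $\sigma(P)\subseteq\mathbb T$; combined with $P$ normal, $P$ is unitary.

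\smallskip

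\noindent\textbf{$(2)\Rightarrow(3)$.} Apply Proposition \ref{lem:3} with $\alpha\in\mathbb{T}$. Since $P^*P=I$ and $|\alpha|^{2k}=1$, the pencil collapses to
\[
A_i - n\alpha^i B_i - n\bar{\alpha}^i B_i^* \geq 0 \quad\text{for all }\alpha\in\mathbb{T},
\]
where $A_i := S_i^*S_i - S_{n-i}^*S_{n-i}$ and $B_i := S_i - S_{n-i}^*P$. Integrating over $\mathbb T$ annihilates the mixed terms and leaves $A_i\geq 0$; swapping $i\leftrightarrow n-i$ gives $A_{n-i}=-A_i\geq 0$, so $A_i=0$. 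The residue reads $\beta B_i + \bar{\beta}B_i^*\leq 0$ for every $\beta=\alpha^i\in\mathbb{T}$; successively substituting $\beta=\pm 1$ forces $B_i+B_i^*=0$ and $\beta=\pm\sqrt{-1}$ forces $B_i-B_i^*=0$, hence $B_i=0$, i.e. $S_i=S_{n-i}^*P$. For the remaining assertion, $P$ unitary forces the last-coordinate projection of $\sigma_T(S_1,\dots,S_{n-1},P)\subseteq \Gamma_n$ into $\mathbb{T}$, which by Theorem \ref{thm:DB}(2) confines the joint spectrum to $b\Gamma_n$; Theorem \ref{thm:DB}(3) together with spectral mapping then yields $\sigma_T(\tfrac{n-1}{n}S_1,\dots,\tfrac{1}{n}S_{n-1})\subseteq \Gamma_{n-1}$. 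Finally, for any polynomial $g$ on $\mathbb{C}^{n-1}$, setting $f(s,p)=g(\tfrac{n-1}{n}s_1,\dots,\tfrac{1}{n}s_{n-1})$ and using that $b\Gamma_n$ is the Shilov boundary,
\[
\|g(\tfrac{n-1}{n}S_1,\dots,\tfrac{1}{n}S_{n-1})\|=\|f(S_1,\dots,S_{n-1},P)\|\leq \sup_{b\Gamma_n}|f|\leq \sup_{\Gamma_{n-1}}|g|,
\]
where the last step again invokes Theorem \ref{thm:DB}(3).

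\smallskip

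\noindent\textbf{$(3)\Rightarrow(1)$.} Since $P$ is normal and each $S_j$ commutes with $P$, Fuglede's theorem gives $S_jP^*=P^*S_j$. From $S_iS_j=S_jS_i$ and $S_i=S_{n-i}^*P$ one obtains $S_{n-i}^*S_jP=S_jS_{n-i}^*P$; cancelling the unitary $P$ yields $S_{n-i}^*S_j=S_jS_{n-i}^*$ for all $i,j$, so every $S_j$ commutes with every $S_k^*$, and in particular each $S_j$ is normal. Thus $\{S_1,\dots,S_{n-1},P\}$ generates a commutative $C^*$-algebra whose Gelfand spectrum coincides with $\sigma_T(S_1,\dots,S_{n-1},P)$, and the relation $S_i=S_{n-i}^*P$ forces every character $(\lambda_1,\dots,\lambda_{n-1},\mu)$ to satisfy $\lambda_i=\bar{\lambda}_{n-i}\mu$. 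Combined with $|\mu|=1$ (from $P$ unitary) and $(\tfrac{n-1}{n}\lambda_1,\dots,\tfrac{1}{n}\lambda_{n-1})\in \Gamma_{n-1}$ (from $\sigma_T$ of the $\Gamma_{n-1}$-contraction), Theorem \ref{thm:DB}(3) places $\sigma_T(S_1,\dots,S_{n-1},P)$ in $b\Gamma_n$, completing the proof.

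\smallskip

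\noindent\textbf{Main obstacle.} The delicate step is extracting $B_i=0$ in $(2)\Rightarrow(3)$: the single pencil inequality only controls a numerical-range quantity, so one must exploit the ``both signs'' symmetry, first pairing $\Phi_i$ with $\Phi_{n-i}$ to kill $A_i$, then using the four unimodular substitutions to kill $B_i$. Once this is done, the commutativity upgrade in $(3)\Rightarrow(1)$ (via Fuglede plus the algebraic identity $S_i=S_{n-i}^*P$) propagates normality to the full tuple and the rest is routine.
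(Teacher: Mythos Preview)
The paper does not prove this theorem; it is quoted from \cite{BSR} (Theorem 4.2 there) as a preliminary result, with no argument supplied. So there is no in-paper proof to compare against.

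That said, your argument is correct and self-contained. The implication $(1)\Rightarrow(2)$ is routine spectral theory for normal tuples. In $(2)\Rightarrow(3)$ your use of Proposition~\ref{lem:3} with $|\alpha|=1$ and $P^*P=I$ to reduce to $A_i-n\alpha^iB_i-n\bar\alpha^iB_i^*\geq 0$, then averaging over $\mathbb T$ and pairing $\Phi_i$ with $\Phi_{n-i}$ to kill $A_i$, is exactly the right manoeuvre; the four unimodular substitutions then force $B_i=0$ cleanly (note that $\alpha\mapsto\alpha^i$ is surjective on $\mathbb T$, so your passage to $\beta\in\mathbb T$ is legitimate). The $\Gamma_{n-1}$-contraction step is also fine: the Shilov-boundary reduction together with Theorem~\ref{thm:DB}(3) gives the norm inequality, and spectral mapping handles the joint-spectrum inclusion. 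In $(3)\Rightarrow(1)$, your Fuglede argument is correct; writing $S_k^*=P^*S_{n-k}$ and using $[S_j,P^*]=0$ immediately gives $[S_j,S_k^*]=0$, whence normality and the $C^*$-algebra description of the Taylor spectrum. For commuting normal tuples the Taylor spectrum coincides with the Gelfand spectrum and projects correctly onto subtuples, so the final appeal to Theorem~\ref{thm:DB}(3) is justified.

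One cosmetic point: in $(2)\Rightarrow(3)$ you first assert the spectral inclusion $\sigma_T\subseteq b\Gamma_n$ via Theorem~\ref{thm:DB}(2), but this is not actually needed for the norm estimate that follows (the Shilov-boundary argument alone suffices), and at that stage you have not yet established normality of the $S_i$, so the spectral-mapping step is better deferred or simply omitted.
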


\section{The fundamental operator tuple ($\mathcal F_O$-tuple) of a $\Gamma_n$
-contraction}

\vspace{0.4cm}

\noindent In \cite{sourav3}, we introduced the following $n-1$
operator pencils $\Phi_1,\dots,\Phi_{n-1}$ in order to determine
the structure of a $\Gamma_n$-contraction $(S_1,\dots,S_{n-1},P)$:
\begin{align}
\Phi_{i}(S_1,\dots, S_{n-1},P) &= (\tilde n_i-S_i)^*(\tilde
n_i-S_i)-(\tilde n_i P-S_{n-i})^*(\tilde n_i P-S_{n-i}) \notag
\\&
={\tilde n_i}^2(I-P^*P)+(S_i^*S_i-S_{n-i}^*S_{n-i})-\tilde n_i(S_i-S_{n-i}^*P) \notag \\
& \quad \quad -\tilde n_i(S_i^*-P^*S_{n-i})\,, \quad \quad \text{
where } \tilde n_i= \binom{n}{i} \label{eq:1a}.
\end{align}
We mention here to the readers that while defining $\Phi_i$ in
\cite{sourav3}, $\tilde n_i$ was mistakenly displayed as $n$ and
that was a typographical error. From the definition it is clear
that in particular when $S_1,\dots,S_{n-1}, P$ are scalars, i.e,
points in $\Gamma_n$, the above operator pencils take the
following form for each $i$ :
\begin{align}
\Phi_{i}(s_1,\dots,s_{n-1},p) & = {\tilde
n_i}^2(1-|p|^2)+(|s_i|^2-|s_{n-i}|^2)-\tilde
n_i(s_i-\bar{s}_{n-i}p) \notag \\ & \quad \quad -\tilde
n_i(\bar{s}_i-\bar{p}s_{n-i}). \label{eqn:2a}
\end{align}
The following result appeared in \cite{sourav3} and is extremely
important in the context of this paper.

\begin{prop}[Proposition 2.6, \cite{sourav3}]\label{lem:3}
Let $(S_1,\dots,S_{n-1},P)$ be a $\Gamma_n$-contraction. Then for
$i=1,\dots,n-1,\; \Phi_i(\alpha
S_1,\dots,\alpha^{n-1}S_{n-1},\alpha^n P)\geq 0$ for all $\alpha
\in\overline{\mathbb D}$.
\end{prop}

The positivity of the operator pencils $\Phi_i$ will determine a
certain and unique operator tuple $(A_1,\dots, A_{n-1})$
associated with each $\Gamma_n$-contraction $(S_1,\dots,
S_{n-1},P)$. We shall call $(A_1,\dots, A_{n-1})$ \textit{the
fundamental operator tuple} or the $\ft$-\textit{tuple} of $(S_1,\dots, S_{n-1},P)$ and the underlying reason is that it plays the central role in constructing the explicit dilation and operator models and in determining the complete unitary invariant for a $C._0 \;\Gamma_n$-contraction.\\

Recall that the {\em numerical radius} of an operator $A$
on a Hilbert space $\mathcal{H}$ is defined by
\[
\omega(A) = \sup \{|\langle Ax,x \rangle|\; : \;
\|x\|_{\mathcal{H}}= 1\}.
\]
It is well known that
\begin{eqnarray}\label{nradius}
r(A)\leq \omega(A)\leq \|A\| \textup{ and } \frac{1}{2}\|A\|\leq
\omega(A)\leq \|A\|, \end{eqnarray} where $r(A)$ is the spectral
radius of $A$. We state a basic lemma on the numerical radius whose proof is a routine exercise. We shall use this lemma in sequel.

\begin{lem} \label{basicnrlemma}
The numerical radius of an operator $A$ is not greater than $1$ if
and only if  Re $(\alpha A) \leq I$ for all complex numbers
$\alpha$ of modulus $1$.
\end{lem}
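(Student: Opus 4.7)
The plan is to prove the two implications directly from the definition of numerical radius, using the fact that $\mathrm{Re}(\alpha A) = \tfrac{1}{2}(\alpha A + \overline{\alpha}A^*)$ is self-adjoint, so the operator inequality $\mathrm{Re}(\alpha A) \leq nI$ is equivalent to the scalar inequality $\langle \mathrm{Re}(\alpha A)x,x\rangle \leq n$ for every unit vector $x$.

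For the forward direction, I would assume $\omega(A) \leq n$ and fix $|\alpha|=1$ and a unit vector $x$. Then
\[
\langle \mathrm{Re}(\alpha A)x,x\rangle = \mathrm{Re}\,\langle \alpha A x,x\rangle = \mathrm{Re}(\alpha \langle Ax,x\rangle) \leq |\alpha|\,|\langle Ax,x\rangle| \leq \omega(A) \leq n,
\]
and because $\mathrm{Re}(\alpha A)$ is self-adjoint this gives the operator inequality $\mathrm{Re}(\alpha A)\leq nI$.

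For the converse, I would take a unit vector $x$ and write $\langle Ax,x\rangle = r e^{i\theta}$ with $r\geq 0$. Choosing $\alpha = e^{-i\theta}$, which has modulus one, we get
\[
|\langle Ax,x\rangle| = r = \mathrm{Re}\,(\alpha \langle Ax,x\rangle) = \langle \mathrm{Re}(\alpha A)x,x\rangle \leq n\|x\|^2 = n.
\]
Taking the supremum over unit vectors $x$ yields $\omega(A)\leq n$.

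The argument is genuinely elementary — no obstacle is expected — the only point worth flagging is that one must use the freedom to rotate $\alpha$ in order to convert the real-part inequality back into an absolute-value inequality; this is where $|\alpha|=1$ (rather than $\alpha=1$ alone) is essential.
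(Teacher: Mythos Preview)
Your proof is correct. The paper itself does not give a proof of this lemma; it merely states that the proof is ``a routine exercise.'' Your argument fills in exactly that routine exercise in the standard way, and there is nothing to compare.
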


\begin{thm}{\bf (Existence and
Uniqueness).}\label{existence-uniqueness} Let
$(S_1,\dots,S_{n-1},P)$ be a $\Gamma_n$-contraction on a Hilbert
space $\mathcal H$. Then there are unique operators
$A_1,\dots,A_{n-1}\in\mathcal B(\mathcal D_P)$ such that
\[
S_i-S_{n-i}^*P=D_PA_iD_P \text{ for } i=1,\dots,n-1.
\]
Moreover, for each $i$ and for all $z\in \mathbb T$, $\omega
(A_i+A_{n-i}z)\leq \tilde n_i$. $[\tilde n_i= \binom{n}{i}]$.
\end{thm}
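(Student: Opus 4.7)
The plan is to run the classical template used for the $n=2,3$ existence-uniqueness theorems: derive a family of operator inequalities from Proposition \ref{lem:3}, factor each $A_i := S_i - S_{n-i}^* P$ through $D_P$ by a Douglas-type argument, and extract the numerical-radius bound from the same inequalities. The crucial algebraic observation is that the mismatch term $S_i^* S_i - S_{n-i}^* S_{n-i}$ is skew-symmetric under $i \leftrightarrow n-i$, which forces one to use $\Phi_i$ and $\Phi_{n-i}$ in tandem.

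I would first expand $\Phi_i(\alpha S_1, \dots, \alpha^n P) \geq 0$ at $|\alpha|=1$; using $\alpha^n \overline{\alpha}^{n-i} = \alpha^i$ and $|\alpha|^{2j} = 1$, the cross terms collapse to $-2n\,\mathrm{Re}(\alpha^i A_i)$, so Proposition~\ref{lem:3} reads
\[
2n\,\mathrm{Re}(\alpha^i A_i) \leq n^2 D_P^2 + (S_i^* S_i - S_{n-i}^* S_{n-i}) \qquad (\alpha \in \mathbb T).
\]
Since $\alpha^i$ sweeps $\mathbb T$, this becomes $(I_i)$: $\mathrm{Re}(\lambda A_i) \leq \tfrac{n}{2} D_P^2 + \tfrac{1}{2n}(S_i^* S_i - S_{n-i}^* S_{n-i})$ for every $\lambda \in \mathbb T$. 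Replacing $\lambda$ by $-\lambda$ gives the matching lower bound and forces the right-hand side to be $\geq 0$, so $S_{n-i}^* S_{n-i} - S_i^* S_i \leq n^2 D_P^2$; combined with its $i \leftrightarrow n-i$ twin, $|S_i^* S_i - S_{n-i}^* S_{n-i}| \leq n^2 D_P^2$. Re-injected into $(I_i)$, this gives $|\mathrm{Re}(\lambda A_i)| \leq n D_P^2$ for every $\lambda \in \mathbb T$, so both $\mathrm{Re}(A_i)$ and $\mathrm{Im}(A_i) = \mathrm{Re}(iA_i)$ are dominated in operator order by $\pm n D_P^2$. For any $h$ with $D_P h = 0$, the positive operators $n D_P^2 \pm \mathrm{Re}(A_i)$ and $n D_P^2 \pm \mathrm{Im}(A_i)$ have zero expectation at $h$, hence annihilate $h$, so $A_i h = 0$; therefore $\varphi(D_P h, D_P k) := \langle A_i h, k\rangle$ defines a bounded sesquilinear form on $\mathrm{Ran}(D_P) \times \mathrm{Ran}(D_P)$, extending by continuity to the unique $F_i \in \mathcal B(\mathcal D_P)$ satisfying $A_i = D_P F_i D_P$. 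Uniqueness is automatic: $D_P F D_P = D_P F' D_P$ forces $F - F'$ to vanish on the dense subspace $\mathrm{Ran}(D_P)$ of $\mathcal D_P$.

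For the numerical-radius clause the key move is that $(I_i)$ and $(I_{n-i})$ are two \emph{separate} operator inequalities, parametrised by \emph{independent} unimodular numbers $\lambda, \mu \in \mathbb T$, whose right-hand sides contain $\pm \tfrac{1}{2n}(S_i^* S_i - S_{n-i}^* S_{n-i})$ and thus cancel in the sum. Hence
\[
\mathrm{Re}(\lambda A_i + \mu A_{n-i}) \leq n D_P^2 \quad \text{for all } \lambda,\mu \in \mathbb T \text{ independently}.
\]
Substituting $A_j = D_P F_j D_P$ and compressing by the dense range of $D_P$ in $\mathcal D_P$ yields $\mathrm{Re}(\lambda F_i + \mu F_{n-i}) \leq n I$ on $\mathcal D_P$ for all $\lambda, \mu \in \mathbb T$. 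Given $z \in \mathbb T$, specialising $\mu = \lambda z$ gives $\mathrm{Re}(\lambda(F_i + zF_{n-i})) \leq n I$ for every $\lambda \in \mathbb T$, whence Lemma \ref{basicnrlemma} delivers $\omega(F_i + F_{n-i}z) \leq n$. The main obstacle is precisely the recognition that $\lambda$ and $\mu$ can be treated as independent parameters; the naive route of varying a single $\alpha \in \mathbb T$ produces only the coupled pair $(\alpha^i, \alpha^{n-i})$, which traces a one-dimensional curve in $\mathbb T^2$ and is strictly weaker than what the two-parameter numerical-radius bound demands.
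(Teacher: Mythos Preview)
The paper does not actually supply a proof of Theorem~\ref{existence-uniqueness}: the statement is placed immediately after Lemma~\ref{basicnrlemma} and is followed directly by a remark, while the bare existence--uniqueness clause has already been recalled from \cite{sourav1} as Theorem~\ref{funda-exist-unique}. The preparatory results Proposition~\ref{lem:3} and Lemma~\ref{basicnrlemma} are set up precisely so that the argument you give goes through, so your route is the intended one and it is correct.

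Two small points are worth making explicit. First, your sentence ``forces the right-hand side to be $\geq 0$'' deserves one more line: if a self-adjoint operator $B$ satisfies $B\geq C$ and $B\geq -C$ for some self-adjoint $C$, then $\langle Bh,h\rangle\geq |\langle Ch,h\rangle|\geq 0$ for every $h$, so indeed $B\geq 0$. Applying this to $B=B_i:=\tfrac{n}{2}D_P^2+\tfrac{1}{2n}(S_i^*S_i-S_{n-i}^*S_{n-i})$ and $C=\mathrm{Re}\,A_i$ gives $B_i\geq 0$, and then the twin $B_{n-i}\geq 0$ yields the two-sided bound $-n^2D_P^2\leq S_i^*S_i-S_{n-i}^*S_{n-i}\leq n^2D_P^2$ that you use.

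Second, you assert that $\varphi(D_Ph,D_Pk):=\langle A_ih,k\rangle$ is a \emph{bounded} sesquilinear form, but only argue well-definedness. Boundedness follows from the diagonal estimate $|\langle A_ih,h\rangle|\leq n\|D_Ph\|^2$ (which is exactly $|\mathrm{Re}(\lambda A_i)|\leq nD_P^2$ over all $\lambda\in\mathbb T$) via polarization and the scaling $h\mapsto th$, $k\mapsto t^{-1}k$; optimizing in $t$ gives $|\langle A_ih,k\rangle|\leq 2n\|D_Ph\|\,\|D_Pk\|$, hence $\|F_i\|\leq 2n$. With these two clarifications the proof is complete, and your observation that $\lambda$ and $\mu$ may be chosen \emph{independently} in $(I_i)+(I_{n-i})$ is exactly the point that makes the numerical-radius clause work.
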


\begin{proof}
We apply Proposition \ref{lem:3} to $(S_1,\dots,S_{n-1},P)$ and
obtain for each $i=1,\dots, n-1$,
\begin{equation}
\Phi_{i}(\alpha S_1,\dots,\alpha^{n-1}S_{n-1},\alpha^nP)\geq 0
\,,\label{eqn11}
\end{equation}
for all $\alpha\in\overline{\mathbb D}$. Therefore, in particular
for $\beta, \gamma \in\mathbb T$ we have from (\ref{eqn11}) for $\Phi_i$ and $\Phi_{n-i}$ respectively
\begin{align}
& \label{10f} {\tilde n_i}^2(I-P^*P)+(S_i^*S_i-S_{n-i}^*S_{n-i}) \geq \tilde
n_i\beta^i(S_i-S_{n-i}^*P)+\tilde n_i
\bar{\beta}^i(S_{i}^*-P^*S_{n-i}) \,,
\\&
\label{11f}
{\tilde n_i}^2(I-P^*P)+(S_{n-i}^*S_{n-i}-S_{i}^*S_i)\geq \tilde
n_i\gamma^{n-i} (S_{n-i}-S_i^*P)+\tilde
n_i\bar{\gamma}^{n-i}(S_{n-i}^*-P^*S_i).
\end{align}
We choose $\beta, \gamma \in \mathbb T$ such that $\gamma^{n-i}=\beta^i=\eta$ and then by adding we get
\begin{align}
2\tilde n_i(I-P^*P) & \geq
\eta \{(S_i-S_{n-i}^*P)+(S_{n-i}-S_i^*P) \} \notag \\&
\label{10a} \quad + \bar{\eta} \{(S_i^*-P^*S_{n-i})+
(S_{n-i}^*-P^*S_i)\}.
\end{align}
This shows that the Laurent polynomial
\begin{align}
\xi(z)=& 2\tilde
n_i(I-P^*P)-z\{(S_i-S_{n-i}^*P)+(S_{n-i}-S_i^*P) \} \notag
\\& \label{11a} -\bar{z}\{(S_i^*-P^*S_{n-i})+
(S_{n-i}^*-P^*S_i)\}
\end{align}
is non-negative for all $z\in\mathbb T$. Therefore, by the Operator
Fejer-Riesz Theorem (see Theorem 1.2 in \cite{DW}) there is a
polynomial of degree $1$, say $P(z)=X_0+X_1z$, such that for
all $z\in\mathbb T$,
\begin{align}
\xi(z)=P(z)^*P(z)
&=(X_0^*+X_1^*\bar{z})(X_0+X_1z)\notag \\&=
(X_0^*X_0+X_1^*X_1)+(X_0^*X_1)z+X_1^*X_0\bar z. \label{11b}
\end{align}
Comparing (\ref{11a}) and (\ref{11b}) we obtain
\begin{gather}
\label{41} 2\tilde n_i D_P^2 =X_0^*X_0+X_1^*X_1 \\
\label{42} (S_i-S_{n-i}^*P)+(S_{n-i}-S_i^*P) =-X_0^*X_1.
\end{gather}
Again putting $\gamma=-\beta$ in (\ref{11f}) and adding (\ref{10f}) and (\ref{11f}) we have that
\begin{align}
2\tilde n_i(I-P^*P) & \geq
\beta \{(S_i-S_{n-i}^*P)-(S_{n-i}-S_i^*P) \} \notag \\&
\label{10e} \quad + \bar{\beta} \{(S_i^*-P^*S_{n-i})-
(S_{n-i}^*-P^*S_i)\}.
\end{align}
This shows that the Laurent polynomial
\begin{align}
\eta(z)=& 2\tilde
n_i(I-P^*P)-z\{(S_i-S_{n-i}^*P)-(S_{n-i}-S_i^*P) \} \notag
\\& \label{11e} -\bar{z}\{(S_i^*-P^*S_{n-i})-
(S_{n-i}^*-P^*S_i)\}
\end{align}
is non-negative for all $z\in\mathbb T$.
Therefore, applying the Operator
Fejer-Riesz Theorem again we have a
polynomial of degree $1$, say $Q(z)=Y_0+Y_1z$, such that for
all $z\in\mathbb T$,
\begin{align}
\eta(z)=Q(z)^*Q(z)
&=(Y_0^*+Y_1^*\bar{z})(Y_0+Y_1z)\notag \\&=
(Y_0^*Y_0+Y_1^*Y_1)+(Y_0^*Y_1)z+Y_1^*Y_0\bar z. \label{11g}
\end{align}
Comparing (\ref{11e}) and (\ref{11g}) we have
\begin{gather}
\label{41e} 2\tilde n_i D_P^2 =Y_0^*Y_0+Y_1^*Y_1 \\
\label{42e} (S_i-S_{n-i}^*P)-(S_{n-i}-S_i^*P) =-Y_0^*Y_1.
\end{gather}
Adding (\ref{41}) and (\ref{41e}) we have
\begin{equation}\label{41h}
4\tilde n_iD_P^2=(X_0^*X_0+X_1^*X_1)+(Y_0^*Y_0+Y_1^*Y_1).
\end{equation}
Similarly adding (\ref{42}) and (\ref{42e}) we have
\begin{equation}\label{42h}
2(S_i-S_{n-i}^*P)=-(X_0^*X_1+Y_0^*Y_1).
\end{equation}
We obtain from (\ref{41h}) that
\[
4\tilde n_i D_P^2\geq X_0^*X_0, \; 4\tilde n_i D_P^2\geq X_1^*X_1,\;
4\tilde n_i D_P^2\geq Y_0^*Y_0\text{ and } 4\tilde n_i D_P^2\geq Y_1^*Y_1\,.
\]
So, by Douglas's lemma (see Lemma 2.1 in \cite{DMP}) there
are contractions $Z_0,Z_1,Z_2, Z_3$ such that
\begin{align*}
X_0^* & =2\sqrt{\tilde n_i}D_PZ_0,\quad  X_1^*=2\sqrt{\tilde n_i}D_PZ_1,\\
 Y_0^* &=2\sqrt{\tilde n_i}D_PZ_2, \quad Y_1^*=2\sqrt{\tilde n_i}D_PZ_3.
\end{align*}

Substituting these values in (\ref{42h}) we have that
\[
S_i-S_{n-i}^*P =D_P[-2\tilde n_i(Z_0Z_1^*+Z_2Z_3^*)]D_P.
\]
Setting $A_i=P_{\mathcal D_P}[-2\tilde n_i(Z_0Z_1^*+Z_2Z_3^*)]|_{\mathcal D_P}$, we have that
\[
S_i-S_{n-i}^*P=D_PA_iD_P\,,
\]
which is true for all $i=1,\dots ,n-1$.\\

\noindent Adding (\ref{10a}) and (\ref{10e}) we have that
\[
4\tilde n_i D_P^2 \geq 4 \text{ Re }\beta (S_i-S_{n-i}^*P)
 =4 \text{ Re }\beta D_PA_iD_P.
\]
Therefore,
\[
D_P^2\geq \text{ Re } \beta D_P(\dfrac{1}{\tilde n_i}A_i)D_P\,,
\]
and thus
\[
D_P[I_{\mathcal D_P}-\text{Re }\beta (\dfrac{1}{\tilde n_i}A_i)]D_P \geq 0\,.
\]
This implies that
\[
I_{\mathcal D_P}-\text{Re }\beta (\dfrac{1}{\tilde n_i}A_i) \geq 0
\]
because $A_i$ is defined on $\mathcal D_P$. Therefore, by Lemma
\ref{basicnrlemma}, we have
\[
\omega(A_i)\leq \tilde n_i= \binom{n}{i}\,.
\]

\vspace{0.4cm}

\noindent {\bf Uniqueness.} Let there be two solutions $A_i,C_i$
of the equation $S_i-S_{n-i}^*P=D_PX_iD_P$. Then
$D_P(A_i-C_i)D_P=0$, which shows that $A_i-C_i=0$ as $A_i-C_i$ is
defined on $\mathcal D_P$. Thus $A_i$ is unique for each $i=1,\dots, n-1$.

\end{proof}

\noindent We shall see in the next subsection a partial converse
to the existence-uniqueness of the $\ft$-tuple. Under a certain
condition, an operator tuple $(A_1,\dots,A_{n-1})$ becomes the
$\ft$-tuple of a $\Gamma_n$-contraction (see Theorem
\ref{converse}).

\begin{note}
The $\ft$-tuple of a $\Gamma_n$-isometry or a $\Gamma_n$-unitary
$(S_1,\dots,S_{n-1},P)$ is defined to be $(0,\dots,0)$ because the
$\ft$-tuple is defined on the space $\mathcal D_P$ and in such
cases $\mathcal D_P=\{0\}$.
\end{note}

\begin{prop}\label{end-prop}
If two $\Gamma_n$-contractions are unitarily equivalent then so
are their $\ft$-tuples.
\end{prop}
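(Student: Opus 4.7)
The plan is to transport the defining equation of the fundamental operator tuple through the unitary intertwiner and then invoke uniqueness. Let $(S_1,\dots,S_{n-1},P)$ on $\mathcal H$ and $(S_1',\dots,S_{n-1}',P')$ on $\mathcal H'$ be unitarily equivalent $\Gamma_n$-contractions via a unitary $U:\mathcal H\to\mathcal H'$ satisfying $US_i=S_i'U$ for $i=1,\dots,n-1$ and $UP=P'U$. Write $(F_1,\dots,F_{n-1})$ and $(F_1',\dots,F_{n-1}')$ for their respective FOTs.

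The first step is to observe that $U$ intertwines the defect operators. From $UP^*P=P'^*P'U$ we get $U(I-P^*P)=(I-P'^*P')U$, and by uniqueness of the positive square root applied to the intertwined positive operators, $UD_P=D_{P'}U$. Consequently, $U$ carries $\overline{\mathrm{Ran}}\,D_P=\mathcal D_P$ onto $\overline{\mathrm{Ran}}\,D_{P'}=\mathcal D_{P'}$, so $\tilde U:=U|_{\mathcal D_P}:\mathcal D_P\to\mathcal D_{P'}$ is a unitary.

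Next I would intertwine the expression $S_i-S_{n-i}^*P$. Since $US_{n-i}=S_{n-i}'U$ gives $US_{n-i}^*=S_{n-i}'^*U$, we compute
\[
U(S_i-S_{n-i}^*P)=S_i'U-S_{n-i}'^*P'U=(S_i'-S_{n-i}'^*P')U,
\]
so $U(S_i-S_{n-i}^*P)U^*=S_i'-S_{n-i}'^*P'$. Substituting the fundamental equation $S_i-S_{n-i}^*P=D_PF_iD_P$ from Theorem \ref{existence-uniqueness} and using $UD_P=D_{P'}U$ together with $D_PU^*=U^*D_{P'}$, we get
\[
S_i'-S_{n-i}'^*P'=UD_PF_iD_PU^*=D_{P'}(\tilde UF_i\tilde U^*)D_{P'}.
\]

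Finally, since $(S_1',\dots,S_{n-1}',P')$ is a $\Gamma_n$-contraction, its FOT $(F_1',\dots,F_{n-1}')$ is the unique tuple of operators on $\mathcal D_{P'}$ satisfying $S_i'-S_{n-i}'^*P'=D_{P'}F_i'D_{P'}$. As $\tilde U F_i\tilde U^*\in\mathcal B(\mathcal D_{P'})$ satisfies exactly this equation, the uniqueness half of Theorem \ref{existence-uniqueness} forces $F_i'=\tilde U F_i\tilde U^*$ for every $i=1,\dots,n-1$. Hence the two FOTs are unitarily equivalent through $\tilde U$. There is essentially no obstacle here; the only point that needs care is the square-root identification $UD_P=D_{P'}U$, which is what allows us to strip the outer $D_{P'}$'s and apply uniqueness on the correct space $\mathcal D_{P'}$.
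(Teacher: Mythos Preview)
Your proof is correct and follows essentially the same route as the paper: intertwine the defect operators, restrict $U$ to $\mathcal D_P$, transport the fundamental equation, and conclude via uniqueness. The only cosmetic difference is that the paper stops at $D_{P'}(\tilde UF_i\tilde U^*)D_{P'}=D_{P'}F_i'D_{P'}$ and cancels the outer $D_{P'}$'s by density of their range, whereas you invoke the uniqueness part of Theorem~\ref{existence-uniqueness} explicitly; these are equivalent.
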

\begin{proof}
Suppose $(S_{11},\dots,S_{1(n-1)},P_1)$ and
$(S_{21},\dots,S_{2(n-1)},P_2)$ are two unitarily equivalent
$\Gamma_n$-contractions acting on Hilbert spaces $\mathcal H_1$ and
$\mathcal H_2$ respectively with $\ft$-tuples $(F_1,\dots,F_{n-1})$ and
$(G_1,\dots,G_{n-1})$. Then there is a unitary $U$ from $\mathcal
H_1$ to $\mathcal H_2$ such that
\[
US_{11}=S_{21}U \;,\dots , US_{1(n-1)}=S_{2(n-1)}U \text{ and }\;
UP_1=P_2U\,.
\]
Obviously $UP_1^*=P_2^*U$ and consequently
\[
UD_{P_1}^2=U(I-P_1^*P_1)=U-P_2^*P_2U=D_{P_2}^2U\,.
\]
Therefore, $UD_{P_1}=D_{P_2}U$. Let $V=U|_{\mathcal D_{P_1}}$.
Then $V\in\mathcal L(\mathcal D_{P_1},\mathcal D_{P_2})$ and
$VD_{P_1}=D_{P_2}V$. Thus, using the fact that
$S_{1i}-S_{1(n-i)}^*P_1$ and $S_{2i}-S_{2(n-i)}^*P_2$ are equal to
$0$ on the orthogonal complement of $\mathcal D_{P_1}$ and
$\mathcal D_{P_2}$ respectively we have
\[
D_{P_2}VF_iV^*D_{P_2} = VD_{P_1}F_iD_{P_1}V^* =
V(S_{1i}-S_{1(n-i)}^*P_1)V^* = S_{2i}-S_{2{n-i}}^*P_2
=D_{P_2}G_iD_{P_2}\,.
\]
So, $F_i$ and $G_i$ are unitarily equivalent for $i=1,\dots,n-1$ and the proof is complete.

\end{proof}

\begin{rem}
The converse to the above result does not hold, i.e, two
non-unitarily equivalent $\Gamma_n$-contractions can have
unitarily equivalent $\ft$-tuples. For example, if we consider a
$\Gamma_n$-isometry on a Hilbert space which is not a
$\Gamma_n$-unitary, then its $\ft$-tuple is $(0,\dots,0)$ which is
same as the $\ft$-tuple of any $\Gamma_n$-unitary on the same
Hilbert space.
\end{rem}

\noindent \textbf{A partial converse to the Existence-Uniqueness Theorem for the $\ft$-tuple.} The existence and uniqueness of the $\ft$-tuple (Theorem
\ref{existence-uniqueness}) is in the center of all results of
this article. Here we provide a partial converse to that result.

\begin{thm}\label{converse}
Let $A_1,\dots,A_{n-1}$ be operators defined on a Hilbert space
$E$ such that
$$
\left(
\frac{n-1}{n}(A_1^*+A_{n-1}z),\frac{n-2}{n}(A_2^*+A_{n-2}z),\dots,\frac{1}{n}(A_{n-1}^*+A_1z)
\right)
$$
is a $\Gamma_{n-1}$-contraction for all $z\in\mathbb T$. Then
there is a $\Gamma_n$-contraction for which $(A_1,\dots,A_{n-1})$
is the $\ft$-tuple.
\end{thm}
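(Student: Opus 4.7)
The plan is to deduce Theorem~\ref{converse} directly from Corollary~\ref{dv11}. The hypothesis gives the $\Gamma_{n-1}$-contraction property of
\[
\Psi(z) := \left( \tfrac{n-1}{n}\varphi_1(z),\ldots, \tfrac{1}{n}\varphi_{n-1}(z) \right), \qquad \varphi_i(z) := F_i^* + F_{n-i}z,
\]
only for $z \in \mathbb{T}$, whereas Corollary~\ref{dv11} requires this property for every $z \in \overline{\mathbb{D}}$. Once that upgrade is achieved, the corollary immediately produces the desired $\Gamma_n$-contraction on $H^2(E)$ (the adjoint of the pure $\Gamma_n$-isometry $(T_{\varphi_1},\ldots,T_{\varphi_{n-1}}, T_z)$), whose fundamental operator tuple is $(F_1,\ldots,F_{n-1})$.

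The extension proceeds in two observations. First, the commutator
\[
[\varphi_i(z),\varphi_j(z)] = [F_i^*,F_j^*] + \bigl([F_i^*,F_{n-j}] - [F_j^*,F_{n-i}]\bigr)z + [F_{n-i},F_{n-j}]z^2
\]
is an operator-valued polynomial in $z$ whose three coefficients all vanish under the almost normal hypothesis (interpreted, consistently with condition~(i) of Theorem~\ref{thm:DVchar}, as pairwise commutativity of the $F_i$'s together with $[F_i^*,F_{n-j}]=[F_j^*,F_{n-i}]$); thus $\Psi(z)$ is a commuting tuple for every $z\in\mathbb{C}$ and the polynomial functional calculus $f(\Psi(z))$ is unambiguously defined. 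Second, for any holomorphic polynomial $f$ in $n-1$ variables, $z\mapsto f(\Psi(z))$ is itself an operator-valued polynomial, so $\|f(\Psi(z))\|$ is subharmonic on $\mathbb{D}$ and continuous on $\overline{\mathbb{D}}$. The maximum modulus principle then promotes the boundary estimate $\|f(\Psi(z))\|\leq \|f\|_{\infty,\Gamma_{n-1}}$ from $\mathbb{T}$ to the whole closed disk. The accompanying Taylor spectrum requirement $\sigma_T(\Psi(z))\subseteq \Gamma_{n-1}$ is then automatic from polynomial convexity of $\Gamma_{n-1}$: if $\lambda \in \sigma_T(\Psi(z))$, then for every polynomial $p$ the Taylor spectral mapping theorem gives $p(\lambda) \in \sigma(p(\Psi(z)))$, whence
\[
|p(\lambda)| \leq \|p(\Psi(z))\| \leq \|p\|_{\infty,\Gamma_{n-1}},
\]
which by polynomial convexity forces $\lambda \in \Gamma_{n-1}$.

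Combining the two observations shows that $\Psi(z)$ is a $\Gamma_{n-1}$-contraction for every $z\in\overline{\mathbb{D}}$, and Corollary~\ref{dv11} completes the proof. The main (if modest) obstacle is the first observation: isolating precisely what ``almost normal'' has to mean so that the $\varphi_i(z)$'s commute as operator-valued polynomials in $z$, which is the prerequisite for the functional calculus and the subharmonicity argument of the second observation. Once that algebraic identification is in place, the remainder is a one-line maximum-modulus argument plus polynomial convexity, followed by a direct appeal to Corollary~\ref{dv11}.
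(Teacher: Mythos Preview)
Your proposal is correct and the route is close in spirit to the paper's but with one genuine difference in how you bridge the boundary hypothesis to the Toeplitz model. The paper does not extend the pointwise $\Gamma_{n-1}$-contraction property from $\mathbb T$ to $\overline{\mathbb D}$ at all; instead it works directly on $L^2(E)$, observing that the fiberwise hypothesis on $\mathbb T$ already makes the scaled tuple of multiplication operators $\bigl(\tfrac{n-1}{n}M_{\varphi_1},\ldots,\tfrac{1}{n}M_{\varphi_{n-1}}\bigr)$ a $\Gamma_{n-1}$-contraction, and then restricts to the invariant subspace $H^2(E)$ to obtain the Toeplitz tuple. With $T_{\varphi_i}=T_{\varphi_{n-i}}^*T_z$ and $T_z$ an isometry, part~(2) of Theorem~\ref{thm:ti} then gives the $\Gamma_n$-isometry, and the FOT computation is the same as yours. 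Your approach instead upgrades the hypothesis from $\mathbb T$ to $\overline{\mathbb D}$ via subharmonicity of $z\mapsto\|f(\Psi(z))\|$ and polynomial convexity, so that you can quote Corollary~\ref{dv11} verbatim. Both arguments hinge on the same algebraic content of ``almost normal'' (commutativity of the $F_i$ together with $[F_i^*,F_{n-j}]=[F_j^*,F_{n-i}]$), and both land on the identical $\Gamma_n$-contraction $(T_{\varphi_1}^*,\ldots,T_{\varphi_{n-1}}^*,T_z^*)$; the paper's $L^2$-restriction trick is marginally shorter, while your maximum-modulus step has the virtue of matching the hypotheses of Corollary~\ref{dv11} exactly and isolating clearly where almost normality enters.
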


\begin{proof}
Let us consider the vectorial Hardy-Hilbert space $H^2(E)$ and the
Toeplitz operator tuple
$(T_{A_1^*+A_{n-1}z},\dots,T_{A_{n-1}^*+A_1z},T_z)$ acting on it.
Here $T_z$ on $H^2(E)$ is the shift operator. We shall show that
$
(T_{A_1^*+A_{n-1}z}^*,\dots,T_{A_{n-1}^*+A_1z}^*,T_z^*)
$
is a $\Gamma_n$-co-isometry and $(A_1,\dots,A_{n-1})$ is the
$\ft$-tuple of it. Since for all $z \in \mathbb T$, $$\left(
\frac{n-1}{n}(A_1^*+A_{n-1}z),\frac{n-2}{n}(A_2^*+A_{n-2}z),\dots,
\frac{1}{n}(A_{n-1}^*+A_1z) \right)$$ is a
$\Gamma_{n-1}$-contraction, so is $$\left(
\frac{n-1}{n}M_{A_1^*+A_{n-1}z},\frac{n-2}{n}M_{A_2^*+A_{n-2}z},
\dots,\frac{1}{n}M_{A_{n-1}^*+A_1z} \right),$$ where each of the
multiplication operators is defined on $L^2(E)$. It is evident
that for each $z\in\mathbb T$, $M_{{A_i}^*+{A_{n-i}}z}=M_{{A_{n-i}}^*+{A_i}z}^*M_z $ and $M_z$
on $L^2(E)$ is unitary. So by Theorem \ref{thm:tu}, the multiplication operator tuple $(M_{A_1^*+A_{n-1}z},\dots,M_{A_{n-1}^*+A_1z},M_z)$ on $L^2(E)$ is a $\Gamma_n$-unitary and the Toeplitz operator tuple
$(T_{A_1^*+A_{n-1}z},\dots,T_{A_{n-1}^*+A_1z},T_z)$, being the
restriction of $(M_{A_1^*+A_{n-1}z},\dots,M_{A_{n-1}^*+A_1z},M_z)$
to the common invariant subspace $H^2(E)$, is a
$\Gamma_n$-isometry. Therefore,
$
(T_{A_1^*+A_{n-1}z}^*,\dots,T_{A_{n-1}^*+A_1z}^*,T_z^*)
$
is a $\Gamma_n$-co-isometry. We now compute the $\ft$-tuple of
$(T_{A_1^*+A_{n-1}z}^*,\dots,T_{A_{n-1}^*+A_1z}^*,T_z^*). $ Now
for each $i=1,\dots,n-1$,
\[
T_{A_i^*+A_{n-i}z}^*- T_{A_{n-i}^*+A_iz}T_z^*=T_{A_i+A_{n-i}^*
\bar z}- T_{A_{n-i}^*+A_iz}T_{\bar z}= T_{A_i}=A_i.
\]
Again since $I-T_zT_z^*$ is the projection onto the space
$\mathcal D_{T_z^*} (=E)$,
\[
(I-T_zT_z^*)^{\frac{1}{2}}A_i(I-T_zT_z^*)^{\frac{1}{2}}={A_i}
\]
Therefore, by the uniqueness of the $\ft$-tuple, $(A_1,\dots,A_{n-1})$
is the $\ft$-tuple of the $\Gamma_n$-co-isometry
$(T_{A_1^*+A_{n-1}z}^*,\dots,T_{A_{n-1}^*+A_1z}^*,T_z^*)$.

\end{proof}

\section{Dilation and model theory for $C._0$ $\Gamma_n$-contractions}

\vspace{0.4cm}

\subsection{Dilation}

We have witnessed in \cite{sourav1} that in general rational
dilation fails on the symmetrized polydisc in any dimension
greater than $2$. In this subsection, we shall determine a class
of $C._0$ $\Gamma_n$-contractions that dilate to the distinguished
boundary $b\Gamma_n$. Indeed, we impose certain conditions on the
$\ft$-tuple of a $C._0$ $\Gamma_n$-contraction to obtain a
$\Gamma_n$-isometric dilation and then extend that $\Gamma_n$-isometry to a $\Gamma_n$-unitary which eventually becomes a
$\Gamma_n$-unitary dilation.

\begin{defn}
Let $(S_1,\dots,S_{n-1},P)$ be a $\Gamma_n$-contraction on
$\mathcal H$. A commuting operator tuple $(T_1,\dots,T_{n-1},V)$ defined on
$\mathcal K$ is said to be a $\Gamma_n$-isometric dilation of
$(S_1,\dots,S_{n-1},P)$ if $\mathcal H \subseteq \mathcal K$,
$(T_1,\dots,T_{n-1},V)$ is a $\Gamma_n$-isometry and
\[
P_{\mathcal H}(T_1^{m_1}\dots T_{n-1}^{m_{n-1}}V^{n})|_{\mathcal
H} =S_1^{m_1}\dots S_{n-1}^{m_{n-1}}P^{n},
\]
for all non-negative integers $ m_1, \dots, m_{n-1},n.$ Moreover,
the dilation is called {\em minimal} if the following holds:
\[
\mathcal K=\overline{\textup{span}}\{ T_1^{m_1}\dots
T_{n-1}^{m_{n-1}}V^n h\,:\; h\in\mathcal H \textup{ and
}m_1,\dots, m_{n-1},n\in \mathbb N \cup \{0\} \}.
\]
In a similar fashion one can define a $\Gamma_n$-unitary dilation of
a $\Gamma_n$-contraction.
\end{defn}
Needless to mention that a $\Gamma_n$-unitary dilation of a
$\Gamma_n$-contraction is a normal dilation to the distinguished
boundary $b\Gamma_n$. Before going to the explicit dilation, we
state and prove the following result which we use in the proof of
the dilation theorem.

\begin{prop}\label{dilation-extension}
Let $(T_1,\dots,T_{n-1},V)$ on $\mathcal K$ be a
$\Gamma_n$-isometric dilation of a $\Gamma_n$-contraction
$(S_1,\dots,S_{n-1},P)$ on $\mathcal H$. If
$(T_1,\dots,T_{n-1},V)$ is minimal, then
$(T_1^*,\dots,T_{n-1}^*,V^*)$ is a $\Gamma_n$-co-isometric
extension of $(S_1^*,\dots,S_{n-1}^*,P^*)$. Conversely, the
adjoint of a $\Gamma_n$-co-isometric extension of
$(S_1,\dots,S_{n-1},P)$ is a $\Gamma_n$-isometric dilation of
$(S_1,\dots,S_{n-1},P)$.
\end{prop}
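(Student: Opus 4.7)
The plan is to handle the two directions by exploiting the dilation identity $P_{\mathcal H}(T_1^{m_1}\cdots T_{n-1}^{m_{n-1}}V^n)|_{\mathcal H} = S_1^{m_1}\cdots S_{n-1}^{m_{n-1}}P^n$ together with the commutativity of the tuple. The forward direction reduces to showing that $\mathcal H$ is invariant under each $T_i^*$ and $V^*$, with $T_i^*|_{\mathcal H}=S_i^*$ and $V^*|_{\mathcal H}=P^*$; once this is known, the fact that $(T_1^*,\dots,T_{n-1}^*,V^*)$ is a $\Gamma_n$-co-isometry follows at once from the definition of a $\Gamma_n$-co-isometry as the adjoint of a $\Gamma_n$-isometry.

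For the key computation in the forward direction, I would fix $h\in\mathcal H$ and test $T_i^*h$ against a spanning family of vectors in $\mathcal K$. By minimality, the set
\[
\{\,T_1^{m_1}\cdots T_{n-1}^{m_{n-1}}V^n h' : h'\in\mathcal H,\ m_1,\dots,m_{n-1},n\in\mathbb N\cup\{0\}\,\}
\]
is total in $\mathcal K$, so it suffices to verify $\langle T_i^* h - S_i^* h, T_1^{m_1}\cdots T_{n-1}^{m_{n-1}}V^n h'\rangle_{\mathcal K}=0$ for every such vector. Moving $T_i$ to the right side of the inner product, using the commutativity of $T_1,\dots,T_{n-1},V$ to absorb it into the exponent $m_i$, then projecting onto $\mathcal H$ (legitimate because $h\in\mathcal H$) and applying the dilation identity, one rewrites the left-hand side as $\langle h, S_1^{m_1}\cdots S_i^{m_i+1}\cdots S_{n-1}^{m_{n-1}}P^n h'\rangle_{\mathcal H}$. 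Commutativity of $S_1,\dots,S_{n-1},P$ lets one pull out $S_i$, reversing the calculation via the dilation identity (now in the other direction, since $S_i^* h\in\mathcal H$) to obtain $\langle S_i^* h, T_1^{m_1}\cdots T_{n-1}^{m_{n-1}}V^n h'\rangle_{\mathcal K}$. This gives $T_i^* h = S_i^* h$ inside $\mathcal H$, and the same argument with $V$ in place of $T_i$ yields $V^* h = P^* h$.

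The converse is a routine block-matrix argument and does not need minimality. If $(T_1,\dots,T_{n-1},V)$ is a $\Gamma_n$-co-isometric extension of $(S_1,\dots,S_{n-1},P)$, then $\mathcal H$ is a common invariant subspace and the restrictions coincide with $S_1,\dots,S_{n-1},P$; with respect to the decomposition $\mathcal K=\mathcal H\oplus\mathcal H^\perp$ each $T_i$ and $V$ has an upper block-triangular form whose $(1,1)$ entry is $S_i$ (respectively $P$). Taking adjoints turns them into lower block-triangular operators with $(1,1)$ entries $S_i^*$ and $P^*$; since products of lower block-triangular operators are lower block-triangular with the $(1,1)$ entry equal to the product of the $(1,1)$ entries, one reads off the compression formula $P_{\mathcal H}(T_1^{*\,m_1}\cdots T_{n-1}^{*\,m_{n-1}}V^{*\,n})|_{\mathcal H} = S_1^{*\,m_1}\cdots S_{n-1}^{*\,m_{n-1}}P^{*\,n}$. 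Combined with the fact that $(T_1^*,\dots,T_{n-1}^*,V^*)$ is a $\Gamma_n$-isometry (as the adjoint of a $\Gamma_n$-co-isometry), this provides the required $\Gamma_n$-isometric dilation.

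The main obstacle is the bookkeeping in the forward direction: one has to justify the interchange of $T_i$ with the other $T_j$'s and $V$ inside the monomial, then pass between $\mathcal K$- and $\mathcal H$-inner products by inserting and removing $P_{\mathcal H}$ at the correct steps (using $h,S_i^* h\in\mathcal H$), and finally invoke the density afforded by minimality to conclude $T_i^* h\in\mathcal H$. None of the individual moves is difficult, but the argument must be set up so that the dilation identity is applied in the forward direction on one side of the inner product and in the reverse direction on the other.
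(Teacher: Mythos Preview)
Your argument is correct and follows essentially the same route as the paper. The paper organizes the forward direction by first establishing the intertwining relations $S_iP_{\mathcal H}=P_{\mathcal H}T_i$ and $PP_{\mathcal H}=P_{\mathcal H}V$ (via the very same computation on the spanning vectors $T_1^{m_1}\cdots T_{n-1}^{m_{n-1}}V^n h$), and only then deduces $T_i^*|_{\mathcal H}=S_i^*$ by a short inner-product calculation; you fold these two steps into one. For the converse the paper simply writes ``obvious''; your block-triangular argument is the standard way to make that obvious step explicit.
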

\begin{proof}
We first prove that $S_iP_{\mathcal H}=P_{\mathcal H}T_i$ for each
$i$ and $PP_{\mathcal H}=P_{\mathcal H}V$. Clearly
$$\mathcal K=\overline{\textup{span}}\{ T_1^{m_1}\dots T_{n-1}^{m_{n-1}}V^n h\,:\;
h\in\mathcal H \textup{ and }m_1,\dots,m_{n-1},n\in \mathbb N \cup
\{0\} \}.$$ Now for $h\in\mathcal H$ we have that
\begin{align*}
S_iP_{\mathcal H}(T_1^{m_1}\dots T_{n-1}^{m_{n-1}}V^n h) &
=S_i(S_1^{m_1}\dots S_{n-1}^{m_{n-1}}P^n h) \\ & =S_1^{m_1}\dots
S_i^{m_i+1}\dots S_{n-1}^{m_{n-1}}P^n h \\& =P_{\mathcal
H}(T_1^{m_1}\dots T_i^{m_i+1}\dots T_{n-1}^{m_{n-1}}V^n h)\\&
=P_{\mathcal H}T_i(T_1^{m_1}\dots T_{n-1}^{m_{n-1}}V^n h).
\end{align*}
Thus, $S_iP_{\mathcal H}=P_{\mathcal H}T_i$. Similarly we can
prove that $PP_{\mathcal H}=P_{\mathcal H}V$. Also for
$h\in\mathcal H$ and $k\in\mathcal K$ we have that
\[
\langle S_i^*h,k \rangle =\langle P_{\mathcal H}S_i^*h,k \rangle
=\langle S_i^*h,P_{\mathcal H}k \rangle  =\langle h,S_iP_{\mathcal
H}k \rangle =\langle h,P_{\mathcal H}T_ik \rangle =\langle
T_i^*h,k \rangle .
\]
Hence $S_i^*=T_i^*|_{\mathcal H}$ and similarly
$P^*=V^*|_{\mathcal H}$. Therefore, $(T_1^*,\dots,T_{n-1}^*,V^*)$
is a $\Gamma_n$-co-isometric extension of
$(S_1^*,\dots,S_{n-1}^*,P^*)$. The converse part is obvious.

\end{proof}

We are now in a position to present the desired dilation theorem which is one of the main results of this article.

\begin{thm} \label{dilation-theorem}
Let $(S_1,\dots , S_{n-1},P)$ be a $C._0$ $\Gamma_n$-contraction
on a Hilbert space $\mathcal{H}$ and let the $\ft$-tuple
$(B_1,\dots,B_{n-1})$ of $(S_1^*,\dots,S_{n-1}^*,P^*)$ be such
that $$\left(
\frac{n-1}{n}(B_{1}^*+B_{n-1}z),\frac{n-2}{n}(B_{2}^*+B_{n-2}z),
\dots, \frac{1}{n}(B_{n-1}^*+B_{1}z) \right)$$ is a
$\Gamma_{n-1}$-contraction for all $z \in \mathbb T$. Consider the
operators $T_1,\dots,T_{n-1},V$ on $\mathcal{K}=H^2(\mathbb{D})
\otimes \mathcal{D}_{P^*}$ defined by
$
T_i=I\otimes B_{i}^*+M_z\otimes B_{n-i}$ for $i=1,\dots,
n-1$ and $V=M_z\otimes I$. Then the $n$-tuple $(T_1,\dots,T_{n-1},V)$ is a minimal $C._0 \; \Gamma_n$-isometric dilation of $(S_1,\dots,S_{n-1},P)$.
\end{thm}

\begin{proof}
We have from the Sz.-Nagy-Foias theory for $C._0$ contractions
\cite{nagy} that $\mathcal K$ is the minimal isometric dilation
space and the operator $V$ is the minimal isometric dilation of
$P$. So, the minimality of the dilation follows trivially. It
suffices to prove that $(T_1,\dots,T_{n-1},V)$ is a
$\Gamma_n$-isometric dilation of $(S_1,\dots,S_{n-1},P)$. By
virtue of Lemma \ref{dilation-extension}, it suffices to show
that $(T_1^*,\dots,T_{n-1}^*,V^*)$ is a $\Gamma_n$-co-isometric
extension of $(S_1^*,\dots,S_{n-1}^*,P^*)$. Since
\[
\left(
\frac{n-1}{n}(B_{1}^*+B_{n-1}z),\frac{n-2}{n}(B_{2}^*+B_{n-2}z),
\dots, \frac{1}{n}(B_{n-1}^*+B_{1}z) \right)
\]
is a $\Gamma_{n-1}$-contraction  for all $ z\in \mathbb T$, it
follows from the proof of Theorem \ref{converse} that the Toeplitz operator tuple
$(T_{B_{1}^*+B_{n-1}z},\dots,T_{B_{n-1}^*+B_{1}z},T_z)$ on
$H^2(\mathcal D_{P^*})$ is a $C._0$ $\Gamma_n$-isometry with
$(B_{1},\dots,B_{n-1})$ being the $\ft$-tuple of its adjoint,
$(T_{B_{1}^*+B_{n-1}z}^*,\dots,T_{B_{n-1}^*+B_{1}z}^*,T_z^*)$.
Also it is a $C._0$ $\Gamma_n$-isometry as $T_z$ is a $C._0$ isometry. Again since the Toeplitz operator tuple
$(T_{B_{1}^*+B_{n-1}z},\dots,T_{B_{n-1}^*+B_{1}z},T_z)$ on
$H^2(\mathcal D_{P^*})$ is unitarily equivalent to
$(T_1,\dots,T_2,V)$ on $\mathcal{K}=H^2(\mathbb{D}) \otimes
\mathcal{D}_{P^*}$, $(T_1,\dots,T_{n-1},V)$ is a $C._0$
$\Gamma_n$-isometry. All we have to prove now is that
$(T_1^*,\dots,T_{n-1}^*,V^*)$ is a
$\Gamma_n$-co-isometric extension of $(S_1^*,\dots,S_{n-1}^*,P^*)$.\\

Let us define $
W: \; \mathcal{H} \rightarrow \mathcal{K}$ by $Wh=\sum_{n=0}^{\infty} z^n\otimes D_{P^*}{P^*}^n h$.
Now
\begin{align*}
\|Wh\|^2 &= \|\displaystyle \sum_{n=0}^{\infty}{z^n\otimes
D_{P^*}{P^*}^n h}\|^2 \\&= \langle \displaystyle
\sum_{n=0}^{\infty}{z^n\otimes D_{P^*}{P^*}^n h}\;,\;\displaystyle
\sum_{m=0}^{\infty}{z^m\otimes D_{P^*}{P^*}^m h} \rangle
\\& = \displaystyle \sum_{m,n=0}^{\infty} \langle z^n,z^m \rangle
\langle D_{P^*}{P^*}^nh\;,\;D_{P^*}{P^*}^mh \rangle \\&
=\displaystyle \sum_{n=1}^{\infty}{\langle P^n D_{P^*}^2
{P^*}^nh,h \rangle}\\&= \displaystyle \sum_{n=0}^{\infty}\langle
P^n(I-PP^*){P^*}^nh,h \rangle\\& = \displaystyle
\sum_{n=0}^{\infty}\{\langle P^n{P^*}^nh,h \rangle-\langle
P^{n+1}{P^*}^{n+1}h,h \rangle\} \\&= \|h\|^2-\lim_{n \rightarrow
\infty}\|{P^*}^nh\|^2.
\end{align*}
Since $P$ is a pure contraction, $ \displaystyle
\lim_{n\rightarrow \infty}\|{P^*}^nh\|^2=0$ and thus
$\|Wh\|=\|h\|.$ Therefore $W$ is an isometry. For a basis vector $z^n\otimes \eta$ of $\mathcal{K}$ we have that
\[
\langle W^*(z^n\otimes \eta),h \rangle  = \langle z^n \otimes \eta
, \displaystyle \sum_{k=0}^{\infty}{z^k \otimes D_{P^*}{P^*}^kh}
\rangle  = \langle \eta , D_{P^*}{P^*}^nh \rangle = \langle
P^n D_{P^*}\xi , h \rangle.
\]
Therefore,
\begin{equation}\label{001}
W^*(z^n\otimes \eta)=P^n D_{P^*} \eta, \quad \text{ for }
n=0,1,2,3,\hdots
\end{equation}
and hence
\[
PW^*(z^n \otimes \eta)=P^{n+1} D_{P^*} \eta, \text{ for }
n=0,1,2,3,\hdots.
\]
Again by (\ref{001}),
\[
W^*V(z^n \otimes \eta)=W^*(M_z \otimes I)(z^n \otimes \eta) =
W^*(z^{n+1} \otimes \eta) = P^{n+1}D_{P^*}\eta
=PW^*(z^n\otimes \eta).
\]
Consequently, $W^*V = PW^*$, i.e, $V^*W=WP^*$ and
hence $V^*|_{W(\mathcal H)}=WP^*W^*|_{W(\mathcal H)}$.\\

We now show that $W^*T_i=S_iW^*$ for each $i=1,\dots, n-1$;
\begin{align*}
W^*T_i(z^n \otimes \eta)&=W^*(I\otimes B_{i}^*+M_z\otimes
B_{n-i})(z^n \otimes \eta) \\&=W^*(z^n\otimes B_{i}^*
\eta)+W^*(z^{n+1}\otimes B_{n-i} \eta)\\&=P^nD_{P^*}B_{i}^* \eta
+P^{n+1}D_{P^*}B_{n-1} \eta .
\end{align*}
Also for each $i$, \begin{equation}\label{002} S_iW^*(z^n\otimes
\eta)=S_iP^nD_{P^*} \eta=P^nS_iD_{P^*} \eta . \end{equation}
\noindent \textit{Claim 1.} $S_iD_{P^*}=D_{P^*}B_{i}^*+PD_{P^*}B_{n-i}$.\\
\noindent \textit{Proof of Claim 1.} Since $(B_{1},
\dots,B_{n-1})$ is the $\ft$-tuple of
$(S_1^*,\dots,S_{n-1}^*,P^*)$, we have
\[
(D_{P^*}B_{i}^*+PD_{P^*}B_{n-i})D_{P^*}=(S_i-PS_{n-i}^*)+P(S_{n-i}^*-S_iP^*)=S_iD_{P^*}^2.
\]
Now if $G=S_iD_{P^*}-D_{P^*}B_{i}^*-PD_{P^*}B_{n-i}$, then $G$ is
defined from $\mathcal D_{P^*}$ to $\mathcal H$ and $GD_{P^*}h=0$
for every $h\in \mathcal D_{P^*}$. Thus the claim is proved.\\

So from (\ref{002}) we have
$
S_iW^*(z^n\otimes \eta)=P^n(D_{P^*}B_{i}^*+PD_{P^*}B_{n-i}).
$
Therefore, $W^*T_i=S_iW^*$ and hence $T_i^*|_{W(\mathcal
H)}=WS_i^*W^*|_{W(\mathcal H)}$ for each $i$. Hence the proof is
complete.

\end{proof}

\begin{cor}\label{unitary-dilation}
Let $(S_1,\dots, S_{n-1},P)$ be a $\Gamma_n$-contraction which
satisfies the hypotheses of Theorem \ref{dilation-theorem}. Then
the multiplication operator tuple
$
\left( M_{B_1^*+B_{n-1}z},\dots, M_{B_{n-1}^*+B_1z}, M_z \right )$
on $L^2(\mathcal D_{P^*})$ is a $\Gamma_n$-unitary dilation of $(S_1,\dots, S_{n-1},P)$.
\end{cor}

\begin{proof}
Since the $\Gamma_n$-isometric dilation that is obtained in
Theorem \ref{dilation-theorem} is unitarily equivalent to Toeplitz
operator tuple
$ \left( T_{B_1^*+B_{n-1}z},\dots, T_{B_{n-1}^*+B_1z}, T_z \right )$
on $H^2(\mathcal D_{P^*})$ and since the multiplication operator tuple $\left( M_{B_1^*+B_{n-1}z},\dots, M_{B_{n-1}^*+B_1z}, M_z \right )$ on $L^2(\mathcal D_{P^*})$ is a natural $\Gamma_n$-unitary extension of it, the assertion follows obviously.

\end{proof}

\subsection{Operator models}\label{op-model}

We recall from \cite{nagy} the notion of the characteristic
function of a contraction $T$. For a contraction $T$ defined on a
Hilbert space $\mathcal H$, let $\Lambda_T$ be the set of all
complex numbers for which the operator $I-zT^*$ is invertible. For
$z\in \Lambda_T$, the characteristic function of $T$ is defined as
\begin{eqnarray}\label{e0} \Theta_T(z)=[-T+zD_{T^*}(I-zT^*)^{-1}D_T]|_{\mathcal D_T}.
\end{eqnarray}
By virtue of the relation $TD_T=D_{T^*}P$ (section I.3 of
\cite{nagy}), $\Theta_T(z)$ maps $\mathcal
D_T=\overline{\textup{Ran}}D_T$ into $\mathcal
D_{T^*}=\overline{\textup{Ran}}D_{T^*}$ for every $z$ in
$\Lambda_T$. Let us define
\[
\mathcal{H}_P=(H^2(\mathbb{D})\otimes \mathcal{D}_{P^*}) \ominus
M_{\Theta_P}(H^2(\mathbb{D}) \otimes \mathcal{D}_P).
\]
In \cite{nagy}, Sz.-Nagy and Foias showed that every $C._0$
contraction $P$ defined on a Hilbert space $\mathcal{H}$ is
unitarily equivalent to the operator $P_{\mathcal{H}_P}(M_z
\otimes I_{\mathcal{D}_{P^*}})|_{\mathcal{H}_P}$ on the Hilbert
space $\mathcal{H}_P$, where $P_{\mathcal{H}_P}$ is the projection
of $H^2(\mathbb{D})\otimes \mathcal{D}_{P^*}$ onto
$\mathcal{H}_P$.\\

In this subsection, we independently find a model, which is not necessarily commutative, for a $C._0 \;\Gamma_n$-contraction $(S_1,\dots, S_{n-1},P)$ without assuming any condition on it. On the minimal
dilation space $\mathcal{H}_P$ of $P$ let $R_1,\dots, R_{n-1}$ be
the compressions of $T_1,\dots, T_{n-1}$ (as in Theorem
\ref{dilation-theorem}) respectively to $\mathcal{H}_P$. Evidently
$R_1,\dots, R_{n-1}$ do not commute in general, yet we shall see
that they are individually unitarily equivalent to $S_1,\dots,
S_{n-1}$ respectively. For $P$, we choose the Nagy-Foias model
$R=P_{\mathcal{H}_P}(M_z \otimes
I_{\mathcal{D}_{P^*}})|_{\mathcal{H}_P}$. We also show that under
the hypotheses of Theorem \ref{dilation-theorem}, $(R_1,\dots,
R_{n-1},R)$ on $\mathcal H$ is unitarily equivalent to
$(S_1,\dots, S_{n-1},P)$ and thus in this case it provides a
commutative model. Before going to the main results we state a lemma whose proof could be found in \cite{tirtha-sourav1} (see Lemma 3.3 in \cite{tirtha-sourav1}). For the sake of completeness we include the proof here too.

\begin{lem}\label{L0}
For every contraction $P$, the identity
\begin{eqnarray}\label{L1}
WW^*+M_{\Theta_P}M_{\Theta_P}^*=I_{H^2(\mathbb{D})\otimes
\mathcal{D}_{P^*}}
\end{eqnarray}
holds, where $W$ is the isometry mentioned in the proof of Theorem
\ref{dilation-theorem}.
\end{lem}
\begin{proof}
The operator $W^*$ satisfies the identity
\[
W^*(k_z \otimes \xi)= (I - \bar{z}P)^{-1}D_{P^*}\xi \text{ for } z
\in \mathbb{D} \text{ and } \xi \in \mathcal{D}_{P^*},
\]
where $k_z(w):=(1-\langle w,z\rangle)^{-1}$ for all $w \in
\mathbb{D}$. For a proof one can see Theorem 1.2 in
\cite{arveson2}. Therefore we have
\begin{eqnarray*}
&&\langle (WW^*+M_{\Theta_P}M_{\Theta_P}^*)(k_z \otimes \xi), (k_w
\otimes \eta) \rangle
\\
&=& \langle W^*(k_z \otimes \xi),W^*(k_w \otimes \eta) \rangle +
\langle M_{\Theta_P}^*(k_z \otimes \xi), M_{\Theta_P}^*(k_w
\otimes \eta)  \rangle
\\
&=& \langle (I - \bar{z}P)^{-1}D_{P^*}\xi, (I -
\bar{w}P)^{-1}D_{P^*}\eta  \rangle + \langle k_z \otimes
\Theta_P(z)^*\xi, k_w \otimes \Theta_P(w)^*\eta  \rangle
\\
&=& \langle D_{P^*}(I - wP^*)^{-1}(I - \bar{z}P)^{-1}D_{P^*}\xi,
\eta \rangle + \langle k_z, k_w \rangle \langle
\Theta_P(w)\Theta_P(z)^*\xi, \eta  \rangle
\\
&=& \langle k_z \otimes \xi, k_w \otimes \eta \rangle \text{ for
all $z ,w \in \mathbb{D}$ and $\xi, \eta \in \mathcal{D}_{P^*}$}.
\end{eqnarray*}
The last equality follows from the following well-known
identity
$$
I - \Theta_P(w)\Theta_P(z)^* = (1 - w\bar{z})D_{P^*}(I -
wP^*)^{-1}(I - \bar{z}P)^{-1}D_{P^*}.
$$
Now using the fact that $\{k_z: z \in \mathbb{D}\}$ forms a total
set of $H^2(\mathbb{D})$, the assertion follows.
\end{proof}

\begin{thm}\label{fm}
Let $(S_1,\dots,S_{n-1},P)$ be a $C._0$ $\Gamma_n$-contraction on a
Hilbert space $\mathcal{H}$. Then the operators $S_i$ and $P$ are
unitarily equivalent to $P_{\mathcal{H}_P}(I \otimes B_i^* + M_z
\otimes B_{n-i})|_{\mathcal{H}_P}$ and $P_{\mathcal{H}_P}(M_z
\otimes I_{\mathcal{D}_{P^*}})|_{\mathcal{H}_P}$ respectively,
where $(B_1,\dots,B_{n-1})$ is the $\ft$-tuple of
$(S_1^*,\dots,S_{n-1}^*,P^*)$.
\end{thm}
\begin{proof}
Since $W$ is an isometry, $WW^*$ is the projection onto the range of $W$ and
since $P$ is pure, $M_{\Theta_P}$ is also an isometry. So by Lemma
\ref{L0}, we have that
\[W(\mathcal{H}_P)=(H^2(\mathbb{D})\otimes
\mathcal{D}_{P^*}) \ominus M_{\Theta_P}(H^2(\mathbb{D}) \otimes
\mathcal{D}_P).
\]
So, we have
\begin{eqnarray*}
W^*(I \otimes B_i^* + M_z \otimes B_{n-i})(z^n \otimes \xi)
&=& W^*(z^n \otimes B_i^* \xi) + W^*(z^{n+1} \otimes B_{n-i} \xi)
\\
&=& P^nD_{P^*}B_i^* \xi + P^{n+1}D_{P^*}B_{n-i} \xi
\\
&=& P^n(D_{P^*}B_i^*+PD_{P^*}B_{n-i}) \xi
\\
&=& P^nS_iD_{P^*} \xi \;\;[\text{ by Claim 1 in Theorem
\ref{dilation-theorem}}]
\\
&=& S_iP^nD_{P^*} \xi = S_i W^*(z^n \otimes \xi).
\end{eqnarray*}
So for each $i$, we have $W^*(I \otimes B_i^* + M_z \otimes
B_{n-i})=S_i W^*$ for the vectors of the form $z^n \otimes \xi$,
for all $n \geq 0$ and $\xi \in \mathcal{D}_{P^*}$ which span
$H^2(\mathbb{D})\otimes \mathcal{D}_{P^*}$. Hence we have $W^*(I
\otimes B_i^* + M_z \otimes B_{n-i})=S_i W^*$, which implies that
$W^*(I \otimes B_i^* + M_z \otimes B_{n-i})W=S_i$. Therefore,
$S_i$ is unitarily equivalent to $P_{\mathcal{H}_P}(I \otimes
B_i^* + M_z \otimes B_{n-i})|_{\mathcal{H}_P}$. Again
\begin{eqnarray*}
W^*(M_z \otimes I)(z^n \otimes \xi)=W^*(z^{n+1} \otimes \xi)=
P^{n+1}D_{P^*}\xi= PW^*(z^n \otimes \xi).
\end{eqnarray*}
Therefore by the same argument as above, $P$ is unitarily
equivalent to $P_{\mathcal{H}_P}(M_z \otimes
I_{\mathcal{D}_{P^*}})|_{\mathcal{H}_P}$.
\end{proof}

\begin{cor}\label{representation}
For any $C._0$ $\Gamma_n$-contraction $(S_1,\dots,S_{n-1},P)$,
there are operators $C_1,\dots, C_{n-1}$ satisfying
$\omega(C_i+C_{n-i}z)\leq \binom{n}{i}$ for all $z\in\mathbb T$ and such that $S_i=C_i+PC_{n-i}^*$ for $i=1,\dots, n-1$.
\end{cor}
\begin{proof}
From the previous theorem we have that
\begin{align*} &
W^*(I \otimes B_i^*+M_z \otimes B_{n-i})=S_iW^* \\&\mbox{ or }
W^*(I \otimes B_i^*+M_z \otimes B_{n-i})W=S_i
\\&\mbox{
or } W^*(I \otimes B_i^*)W+W^*(M_z \otimes I)(I \otimes
B_{n-i})W=S_i
\\& \mbox{ or } W^*(I \otimes B_i^*)W+PW^*(I \otimes B_{n-i})W=S_i, \;
\mbox{ since }W^*(M_z \otimes I)=PW^*.
\end{align*}
Taking $C_i= W^*(I \otimes B_i^*)W$ for each $i=1,\dots , n-1$, we
get $S_i=C_i+ PC_{n-i}^*$. The fact that $\omega(C_i+C_{n-i}z)\leq
\binom{n}{i}$ for all $z\in\mathbb T$ is obvious.

\end{proof}

\begin{cor}\label{representation1}
If $(S_1,\dots, S_{n-1},P)$ is a $\Gamma_n$-contraction with
$\|P\|<1$, then there are $n-1$ unique operators
$C_1,\dots,C_{n-1}$ such that $S_i=C_i+PC_{n-i}^*$.
\end{cor}
\begin{proof}
Let there be $C_1,\dots, C_{n-1}$ and $\tilde{C}_1,\dots,
\tilde{C}_{n-1}$ such that $S_i=C_i+PC_{n-i}^*$ and
$S_i=\tilde{C}_i+P\tilde{C}_{n-i}^*$. Then we have
$D_i+PD_{n-i}^*=0$, where $D_i=C_i-\tilde{C}_{n-i}$. Now
$$\|D_i\|=\|-PD_{n-i}^*\|\leq \|P\|\|D_{n-i}\|< \|D_{n-i}\| \quad \mbox{as }
\|P\|<1.$$ Since this holds for each $i$, replacing $i$ by $n-i$
we get $\| D_{n-i} \|< \| D_{i} \|$. This shows that $D_i=0$ for each
$i$ and consequently $C_i=\tilde{C}_i$.
\end{proof}

\begin{thm}\label{modelthm}
Let $(S_1,\dots,S_{n-1},P)$ be a $C._0$ $\Gamma_n$-contraction on
a Hilbert space $\mathcal{H}$ that satisfies the hypotheses of
Theorem \ref{dilation-theorem}. Then $(S_1,\dots,S_{n-1},P)$ is
unitarily equivalent to the tuple $(R_1,\dots,R_{n-1},R)$ on the
Hilbert space $\mathcal H_P=(H^2(\mathbb D)\otimes \mathcal
D_{P^*})\ominus M_{\Theta_P}(H^2(\mathbb D)\otimes \mathcal D_P)$
defined by
\begin{align*}
& R_i=P_{\mathcal H_P}(I\otimes B_{i}^*+M_z\otimes
B_{n-i})|_{\mathcal H_P},\; \text{ for } i=1,\dots,n-1 \\&
\text{and } R=P_{\mathcal H_P}(M_z\otimes I)|_{\mathcal H_P}.
\end{align*}
\end{thm}

\begin{proof}
Following the proof of Theorem \ref{dilation-theorem}, it suffices
to show that $W(\mathcal H)=\mathcal H_P$, which follows from the
fact that
$
WW^*+M_{\Theta_P}M_{\Theta_P}^*=I_{H^2(\mathbb D)\otimes \mathcal D_{P^*}}$. Hence the proof is complete.
\end{proof}

\vspace{0.4cm}

\section{A complete unitary invariant for $C._0$
$\Gamma_n$-contractions}

\vspace{0.4cm}

\noindent A complete unitary invariant for a class of operator tuples, defined on a Hilbert space $\mathcal H$, is a necessary and sufficient condition under which any two such operator tuples are unitarily equivalent, that is, there is a unitary on $\mathcal H$ that intertwines the corresponding components of the two tuples. For a pair of contractions $P, P'$ acting on Hilbert spaces $\mathcal{H}$ and $\mathcal{H'}$ respectively, we say that the
characteristic functions of $P$ and $P'$ coincide if there are
unitary operators $u: \mathcal{D}_P \to \mathcal{D}_{P'}$ and
$u_{*}: \mathcal{D}_{P^*} \to \mathcal{D}_{{P'}^*}$ such that the
following diagram commutes for all $z \in \mathbb{D}$,

$$
\begin{CD}
\mathcal{D}_P @>\Theta_P(z)>> \mathcal{D}_{P^*}\\
@Vu VV @VVu_{*} V\\
\mathcal{D}_{P'} @>>\Theta_{{P'}}(z)> \mathcal{D}_{{P'}^*}
\end{CD}.
$$
A few decades ago, Sz.-Nagy and Foias proved the following
theorem, which asserts that the characteristic function
is a complete unitary invariant for the c.n.u contractions.

\begin{thm}[Nagy-Foias, \cite{nagy}]\label{nf}
Two completely non-unitary contractions are unitarily equivalent
if and only if their characteristic functions coincide.
\end{thm}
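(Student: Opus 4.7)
The plan is to handle the two directions separately. For the easy direction, suppose $U : \mathcal{H} \to \mathcal{H}'$ is a unitary with $UP = P'U$. Then $UP^* = {P'}^*U$, hence $U(I-P^*P) = (I-{P'}^*P')U$ and $U(I-PP^*) = (I-P'{P'}^*)U$. Functional calculus on the positive operators yields $UD_P = D_{P'}U$ and $UD_{P^*} = D_{{P'}^*}U$, so $U$ restricts to unitaries $u := U|_{\mathcal{D}_P}$ and $u_* := U|_{\mathcal{D}_{P^*}}$. Expanding $\Theta_P(z) = -P + zD_{P^*}(I - zP^*)^{-1}D_P$ as a Neumann series, every coefficient is a polynomial in $P, D_P, D_{P^*}$ which $U$ conjugates into the corresponding polynomial built from $P', D_{P'}, D_{{P'}^*}$, giving $u_* \Theta_P(z) = \Theta_{P'}(z) u$ for all $z \in \mathbb{D}$.

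For the converse, I would invoke the Sz.-Nagy and Foias functional model. If $P$ were pure, Theorem \ref{fm} of the present paper identifies $P$ with the compression of $M_z \otimes I_{\mathcal{D}_{P^*}}$ to $\mathcal{H}_P = (H^2 \otimes \mathcal{D}_{P^*}) \ominus M_{\Theta_P}(H^2 \otimes \mathcal{D}_P)$, and this model depends only on the characteristic function. In that case the coincidence unitaries $u, u_*$ induce $I_{H^2} \otimes u$ and $I_{H^2} \otimes u_*$, which carry $M_{\Theta_P}$ to $M_{\Theta_{P'}}$, hence send $\mathcal{H}_P$ onto $\mathcal{H}_{P'}$ and intertwine the two compressed shifts, yielding a unitary between $\mathcal{H}$ and $\mathcal{H}'$ that takes $P$ to $P'$.

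For a general completely non-unitary $P$, I would use the larger Sz.-Nagy and Foias model space
\[
\mathcal{K}_P = \bigl(H^2(\mathcal{D}_{P^*}) \oplus \overline{\Delta_P L^2(\mathcal{D}_P)}\bigr) \ominus \{\Theta_P f \oplus \Delta_P f : f \in H^2(\mathcal{D}_P)\},
\]
where $\Delta_P(e^{it}) = (I - \Theta_P(e^{it})^* \Theta_P(e^{it}))^{1/2}$, with model operator the compression of the bilateral shift to $\mathcal{K}_P$. Taking adjoints in the coincidence identity forces $\Delta_P^2 = u^* \Delta_{P'}^2 u$, so $u$ extends to a unitary between the $\overline{\Delta L^2}$ summands commuting with multiplication by $e^{it}$, while $u_*$ lifts to the $H^2$ summands; the pair preserves the graph subspace and hence descends to a unitary $\mathcal{K}_P \to \mathcal{K}_{P'}$ intertwining the respective model operators.

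The principal obstacle is establishing that the Sz.-Nagy and Foias model is faithful for every completely non-unitary contraction, i.e., that $P$ is really unitarily equivalent (not merely similar) to its model on $\mathcal{K}_P$. This is the deep content of the functional model theorem of \cite{nagy} and is precisely where the completely non-unitary hypothesis enters essentially: a non-trivial unitary direct summand would not be detected by $\Theta_P$ at all. Granted this faithfulness, the unitary between model spaces constructed above transfers back, via the two model identifications, to the required unitary between $\mathcal{H}$ and $\mathcal{H}'$ conjugating $P$ into $P'$.
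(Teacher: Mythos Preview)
The paper does not prove this theorem at all: it is stated with the attribution ``[Nagy--Foias, \cite{nagy}]'' and immediately used in the proof of Theorem~\ref{unitary inv}, with no argument given. So there is no ``paper's own proof'' to compare against.

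That said, your sketch is the standard Sz.-Nagy--Foias argument and is correct in outline. The forward direction is routine and you have it right. For the converse you correctly identify that the heart of the matter is the faithfulness of the functional model for c.n.u.\ contractions, and you honestly flag this as the part you are quoting from \cite{nagy} rather than reproving. One small comment: you cite Theorem~\ref{fm} of the present paper for the pure case, but note that Theorem~\ref{fm} as stated here is for pure $\Gamma_n$-contractions, not for a single pure contraction; the single-operator version you need is the classical Nagy--Foias model quoted just before Theorem~\ref{modelthm}. This is a labeling issue, not a mathematical one.
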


Here we present a complete unitary invariant for the $C._0$
$\Gamma_n$-contractions.

\begin{thm}\label{unitary inv}
 Let $(S_1,\dots,S_{n-1},P)$ and $(S_1',\dots,S_{n-1}',P')$ be two $C._0$ $\Gamma_n$-contractions
 defined on $\mathcal{H}$ and $\mathcal{H'}$ respectively.
 Suppose $(B_1,\dots,B_{n-1})$ and $(B_1',\dots,B_{n-1}')$ are the $\ft$-tuples of
 $(S_1^*,\dots,S_{n-1}^*,P^*)$ and $(S_1'^*,\dots,S_{n-1}'^*,P'^*)$ respectively.
 Then $(S_1,\dots,S_{n-1},P)$ is unitarily equivalent to $(S_1',\dots,S_{n-1}',P')$ if and only if
 $(B_1,\dots,B_{n-1},\Theta_P)$ and
 $(B_1',\dots,B_{n-1}',\Theta_{P'})$ are unitarily equivalent.
 \end{thm}

\begin{rem}
The sense carried out by the unitary equivalence of
\[
(B_1,\dots,B_{n-1},\Theta_P) \text{ and }
 (B_1',\dots,B_{n-1}',\Theta_{P'})
\]
 is that the characteristic
 functions of $P$ and $P'$ coincide and $(B_1,\dots,B_{n-1})$ is
 unitarily equivalent to $(B_1',\dots,B_{n-1}')$
 by the unitary $u_{*}: \mathcal{D}_{P^*} \to
\mathcal{D}_{{P'}^*}$ that is involved in the coincidence of the
characteristic functions of $P$ and $P'$.
\end{rem}

\begin{proof}

First let us assume that $(S_1,\dots,S_{n-1},P)$ and
$(S_1',\dots,S_{n-1}',P')$ are unitarily equivalent and let
$U:\mathcal{H} \to \mathcal{H'}$ be a unitary such that
$US_i=S_i'U$ for each $i$ and $UP=P'U$. Since $P$ is a $C._0$
contraction, it is a completely non-unitary contraction and hence
by Theorem \ref{nf}, the characteristic functions of $P$ and $P'$
coincides. The unitary $u_{*}: \mathcal{D}_{P^*} \to
\mathcal{D}_{{P'}^*}$ that is involved in the coincidence of the
characteristic functions $\Theta_P$ and $\Theta_{P'}$ is nothing
but the restriction of $U$ to $\mathcal D_{P^*}$ that takes
$\mathcal D_{P^*}$ onto $\mathcal D_{{P'}^*}$. An interested
reader can see Chapter VI of \cite{nagy} for a proof of this fact.
We now prove that the same unitary intertwines the $\mathcal
F_O$-tuples of $(S_1^*,\dots,S_{n-1}^*,P^*)$ and
${(S_1'}^*,\dots,{S_{n-1}'}^*,{P'}^*)$. We have
$$
UD_{P^*}^2=U(I-PP^*)=U-{P'}{P'}^*U=D_{{P'}^*}^2U,
$$
which gives $UD_{P^*}=D_{{P'}^*}U$. Let
$\tilde{U}=U|_{\mathcal{D}_{P^*}}$. Then note that $\tilde{U}\in
\mathcal{B}(\mathcal{D}_{P^*}, \mathcal{D}_{{P'}^*})$ and
$\tilde{U}D_{P^*}=D_{{P'}^*}\tilde{U}$. Now for each $i$,
\begin{eqnarray*}
D_{{P'}^*}\tilde{U}B_i\tilde{U}^*D_{{P'}^*}=\tilde{U}D_{P^*}B_iD_{P^*}\tilde{U}^*
&=& \tilde{U}(S_i^*-S_{n-i}P^*){\tilde{U}}^*
\\
&=& S_i'^*-S_{n-i}'P'^*=D_{{P'}^*}B_i'D_{{P'}^*}.
\end{eqnarray*}
Therefore we have $\tilde{U}B_i\tilde{U}^*=B_i'$ for each
$i=1,\dots,n-1$.\\

We prove the converse part. Let $u: \mathcal{D}_P \to
\mathcal{D}_{P'}$ and $u_{*}: \mathcal{D}_{P^*} \to
\mathcal{D}_{{P'}^*}$ be unitary operators such that for each $i$
\[
u_{*}B_i=B_i'u_{*} \text{ and } u_{*} \Theta_P(z) = \Theta_{P'}(z)
u \text{ for all $z \in \mathbb{D}$}.
\]
The unitary operator $u_{*}: \mathcal{D}_{P^*} \to
\mathcal{D}_{{P'}^*}$ induces the following unitary operator
\begin{gather*}
U_{*}: H^2(\mathbb{D})\otimes \mathcal{D}_{P^*} \to
H^2(\mathbb{D})\otimes \mathcal{D}_{P'^*} \\
\quad \quad \quad \quad \quad (z^n \otimes \xi) \mapsto (z^n
\otimes u_{*}\xi) \xi \in \mathcal{D}_{P^*} \;,\; n \geq 0.
\end{gather*}
 We note here that
\[
U_{*}(M_{\Theta_P}f(z))=u_{*}\Theta_{P}(z)f(z)=\Theta_{P'}(z)uf(z)=M_{\Theta_{P'}}(uf(z)),
\]
for all $f \in H^2(\mathbb{D})\otimes \mathcal{D}_{P}$ and $z \in
\mathbb{D}$. Hence $U_{*}$ takes ${\rm Ran}\, M_{\Theta_P}$ onto
${\rm Ran}\, M_{\Theta_{P'}}$. Since $U_{*}$ is unitary, we have
\[
U_{*}(\mathcal{H}_P) =
U_{*}(({\rm Ran} \, M_{\Theta_P})^\bot)=(U_{*}{\rm Ran}\, M_{\Theta_P})^{\bot}=({\rm Ran} \, M_{\Theta_{P'}})^\bot=\mathcal{H}_{P'}.
\]
Again from the definition of $U_{*}$ we have that
\begin{eqnarray*}
U_{*}(I \otimes B_i^* + M_z \otimes B_{n-i})^*&=&(I \otimes
u_{*})(I \otimes B_i + M_z^* \otimes B_{n-i}^*)
\\
&=& I \otimes u_{*} B_i + M_z^* \otimes u_{*} B_{n-i}^*
\\
&=& I \otimes B_i'u_{*} + M_z^* \otimes B_{n-i}'^*u_{*}
\\
&=& (I \otimes B_i' + M_z^* \otimes B_{n-i}'^*)(I \otimes u_{*}) \\
&=&(I \otimes B_i'^* + M_z \otimes B_{n-i}')^*U_{*}.
\end{eqnarray*}
Therefore, $\mathcal{H}_{P'}=U_{*}(\mathcal{H}_P)$ is a joint
co-invariant subspace of $(I \otimes B_i'^* + M_z \otimes
B_{n-i}')$ for $i=1,\dots, n-1$. Hence $ P_{\mathcal{H}_P}(I
\otimes B_i^* + M_z \otimes B_{n-i})|_{\mathcal{H}_P}$ and
$P_{\mathcal{H}_{P'}}(I \otimes B_i'^* + M_z \otimes
B_{n-i}')|_{\mathcal{H}_{P'}} $ are unitarily equivalent for each
$i$. It is evident that the unitary operator that intertwines them
is $U_{*}|_{\mathcal{H}_P}:\mathcal{H}_P \to \mathcal{H}_{P'}$.
Also we have
\[
U_{*}(M_z \otimes I_{\mathcal{D}_{P^*}})=(I \otimes u_{*})(M_z
\otimes I_{\mathcal{D}_{P^*}})=(M_z \otimes
I_{\mathcal{D}_{P'^*}})(I \otimes u_{*})=(M_z \otimes
I_{\mathcal{D}_{P'^*}})U_{*}.
\]
So $P_{\mathcal{H}_P}(M_z \otimes
I_{\mathcal{D}_{P^*}})|_{\mathcal{H}_P}$ and $
P_{\mathcal{H}_{P'}}(M_z \otimes
I_{\mathcal{D}_{P'^*}})|_{\mathcal{H}_{P'}}$ are unitarily
equivalent by the same unitary $U_{*}|_{\mathcal{H}_P}$. Therefore
$(S_1,\dots,S_{n-1},P)$ and $(S_1',\dots,S_{n-1}',P')$ are
unitarily equivalent and the proof is complete.

\end{proof}

\vspace{0.40cm}

\end{document}